\newcommand{\SIZE}{\operatorname{size}}
\newcommand{\SAST}{\operatorname{s}_\ast}
\newcommand{\STRI}{\operatorname{s}_\text{\scriptsize\rm tri}}
\newcommand{\SPAR}{\operatorname{s}_\text{\scriptsize\rm par}}
\newcommand{\STRA}{\operatorname{s}_\text{\scriptsize\rm trap}}
\newcommand{\SPENT}{\operatorname{s}_\text{\scriptsize\rm pent}}
\newcommand{\SHEX}{\operatorname{s}_\text{\scriptsize\rm hex}}
\newcommand{\SASTp}{\operatorname{s}_\ast^\text{\scriptsize\rm t-perf}}
\newcommand{\STRIp}{\operatorname{s}_\text{\scriptsize\rm tri}^\text{\scriptsize\rm t-perf}}
\newcommand{\SPARp}{\operatorname{s}_\text{\scriptsize\rm par}^\text{\scriptsize\rm t-perf}}
\newcommand{\STRAp}{\operatorname{s}_\text{\scriptsize\rm trap}^\text{\scriptsize\rm t-perf}}
\newcommand{\SPENTp}{\operatorname{s}_\text{\scriptsize\rm pent}^\text{\scriptsize\rm t-perf}}
\newcommand{\SHEXp}{\operatorname{s}_\text{\scriptsize\rm hex}^\text{\scriptsize\rm t-perf}}
\newcommand{\DOM}{\operatorname{dom}}
\newcommand{\LENGTH}{\operatorname{length}}
\newtheorem{theorem}{Theorem}
\newtheorem{corollary}[theorem]{Corollary}
\newtheorem{lemma}[theorem]{Lemma}
\begin{document}


\title[Tilings of convex polygons by equilateral triangles]{Tilings of convex polygons by equilateral triangles of many different sizes}

\author{Christian Richter}
\address{Institute of Mathematics, Friedrich Schiller University,
D-07737 Jena, Germany}
\email{christian.richter@uni-jena.de}

\date{\today}

\begin{abstract}
An equilateral triangle cannot be dissected into finitely many mutually incongruent equilateral triangles \cite{tutte1948}. Therefore Tuza \cite{tuza1991} asked for the largest number $s=s(n)$ such that there is a tiling of an equilateral triangle by $n$ equilateral triangles of $s(n)$ different sizes. We solve that problem completely and consider the analogous questions for dissections of convex $k$-gons into equilateral triangles, $k=4,5,6$. Moreover, we discuss all these questions for the subclass of tilings such that no two tiles are translates of each other.
\end{abstract}
\subjclass[2010]{52C20 (primary), 05B45, 51M16 (secondary).}
\keywords{Extremal tiling, perfect tiling, equilateral triangle, convex polygon, graph.}

\maketitle


\section{Overview}

\subsection{Motivation}

A (finite) \emph{tiling} of a subset $A$ of the Euclidean plane $\mathbb{R}^2$ is a family $\mathcal{T}=\{A_1,\ldots,A_n\}$ of subsets of $A$, called \emph{tiles}, such that $A=A_1 \cup \ldots \cup A_n$ and the interiors of $A_1,\ldots,A_n$ are mutually disjoint. A tiling is called \emph{perfect} if all tiles are images of each other under similarity transformations, but mutually incongruent under isometries. The concept of a perfect tiling has been introduced in \cite{brooks1940} for the construction of perfect dissections of rectangles and even of squares into squares. Tutte showed that there are no perfect tilings of triangles by (at least two) equilateral triangles \cite[Theorem 2$\cdot$12]{tutte1948} (see also \cite[Section 10.3]{brooks1940}). In fact, no convex polygon admits a perfect tiling by equilateral triangles, as has been shown by Buchman \cite[Section 2]{buchman1981} and Tuza \cite[Theorem~1]{tuza1991}. 

Therefore Tuza considered tilings of triangles by equilateral triangles that are nearly perfect in the sense that only few of the tiles are isometrically congruent: The \emph{size} $\SIZE(T)$ of an equilateral triangle $T$ is measured by the length of (one of) its sides. For a family $\mathcal{T}=\{T_1,\ldots,T_n\}$ of equilateral triangles, we define $s(\mathcal{T})=|\{\SIZE(T_1),\ldots,\SIZE(T_n)\}|$, where $|\cdot|$ denotes the cardinality of a set. Tuza introduced the numbers
\[
\STRI(n)=\max\{s(\mathcal{T}): \mathcal{T} \mbox{ is a tiling of a triangle by } n \mbox{ equilateral triangles}\}.
\]
He showed by an inflation argument that there exists a constant $c$, $\frac{5}{7} \le c \le 1$, such that $\STRI(n)=cn-o(n)$ as $n \to \infty$ \cite[Theorem~2]{tuza1991} and he asked for the exact values of $\STRI(n)$ \cite[Problem~1]{tuza1991} or at least for the exact value of $c$. We solve that problem completely in Theorem~\ref{thm:general}(a).

If a convex polygon $P$ admits a tiling by equilateral triangles, its inner angles are of size $\frac{\pi}{3}$ or $\frac{2\pi}{3}$ and $P$ must be an equilateral triangle (all inner angles have size $\frac{\pi}{3})$, a trapezoid (sizes of angles are $\frac{\pi}{3}, \frac{\pi}{3}, \frac{2\pi}{3}, \frac{2\pi}{3}$ in cyclic order), a parallelogram ($\frac{\pi}{3}, \frac{2\pi}{3}, \frac{\pi}{3}, \frac{2\pi}{3}$), a pentagon ($\frac{\pi}{3}, \frac{2\pi}{3}, \frac{2\pi}{3}, \frac{2\pi}{3}, \frac{2\pi}{3}$) or a hexagon (six times $\frac{2\pi}{3}$). \emph{(When speaking of trapezoids in the present paper we mean trapezoids that are no parallelograms.)} As we know that no convex polygon has a perfect tiling by equilateral triangles, we introduce relatives of Tuza's numbers, namely
\begin{align*}
\STRA(n)=\max\{s(\mathcal{T}):\;& \mathcal{T} \mbox{ is a tiling of some convex}\\
&\mbox{trapezoid by } n \mbox{ equilateral triangles}\}
\end{align*}
and, similarly, $\SPAR(n)$, $\SPENT(n)$ and $\SHEX(n)$ with `trapezoid' replaced by `parallelogram', `pentagon' and `hexagon', respectively. Theorem~\ref{thm:general} gives our results on these numbers including the determination of the \emph{domains} of the functions $\STRI,\ldots,\SHEX$. These are the sets of all integers $n$ such that there exists a tiling of a suitable convex polygon of the respective shape by $n$ equilateral triangles.

Another possibility of weakening the property of perfectness is based on the fact that, if two tiles of a tiling of a convex set by equilateral triangles are congruent under isometries, then they are either congruent under some translation or under some rotation by an angle of $\pi$. Some authors call the tiling already perfect if no two tiles are congruent under translations \cite{tutte1948,drapal2010}. In order to avoid confusion with (isometric) perfectness, we shall speak of translational perfectness (or t-perfectness for short). That is, a tiling by equilateral triangles is called \emph{t-perfect} if no two tiles are translates of each other. First examples of t-perfect tilings of parallelograms and triangles by equilateral triangles are given in \cite{tutte1948}. Dr\'apal and H\"am\"al\"ainen \cite{drapal2010} present a systematic computational approach to dissections of equilateral triangles into equilateral triangles with a particular emphasis on the t-perfect case. Illustrations of all t-perfect tilings of triangles by up to $19$ equilateral triangles are given in \cite{hamalainen}.

We are interested in t-perfect tilings of convex polygons by equilateral triangles that are close to be (isometrically) perfect in so far as the number of tiles of equal size is as small as possible. More precisely, we study the numbers
\begin{align*}
\STRIp(n)=\max\{s(\mathcal{T}):\,& \mathcal{T} \mbox{ is a t-perfect tiling of a}\\
&\mbox{triangle by } n \mbox{ equilateral triangles}\}
\end{align*}
as well as the analogous variants $\STRAp(n)$, $\SPARp(n)$, $\SPENTp(n)$ and $\SHEXp(n)$ of $\STRA(n)$, $\SPAR(n)$, $\SPENT(n)$ and $\SHEX(n)$, respectively. Our corresponding results are summarized in Theorem~\ref{thm:perfect}.

Finally, let us point out that the study of dissections into incongruent equilateral triangles is a fruitful field of ongoing research, see e.g.\ \cite[Section C11]{croft1991}, \cite[Exercise 2.4.10]{gruenbaum1987}, \cite[Problem~4]{nandakumar} and \cite{aduddell2017, klaassen1995, pach2018, richter2012, richter2018, richter-wirth, scherer1983}.


\subsection{Main results and open problems}

\begin{theorem}\label{thm:general}
\renewcommand{\labelenumi}{\bf (\alph{enumi})}
\begin{enumerate}
\item 
$\DOM(\STRI)=\{1,4\} \cup \{6,7,\ldots\}$ and
$$
\STRI(n)=\left\{
\begin{array}{cl}
1, & n=1,4,\\
2, & n=6,\\
n-5, &n=7,8,\ldots
\end{array}
\right.
$$

\item
$\DOM(\STRA)=\{3\} \cup \{5,6,\ldots\}$ and
$$
\STRA(n)=\left\{
\begin{array}{cl}
1, & n=3,\\
2, & n=5,\\
n-4, &n=6,7,\ldots
\end{array}
\right.
$$

\item
$\DOM(\SPAR)=\{2\} \cup \{4,5,\ldots\}$ and
$$
\SPAR(n)=\left\{
\begin{array}{cl}
1, & n=2,4,\\
2, & n=5,\\
n-4, &n=6,7,\ldots
\end{array}
\right.
$$

\item
$\DOM(\SPENT)=\{4,5,\ldots\}$ and
$$
\SPENT(n)=\left\{
\begin{array}{cl}
2, & n=4,\\
n-3, &n=5,6,\ldots
\end{array}
\right.
$$

\item
$\DOM(\SHEX)=\{6,7,\ldots\}$ and
$$
\SHEX(n)\left\{
\begin{array}{ll}
=n-5, & n=6,7,8,\\
=n-4, & n=9,10,\ldots,19;\;21,22;\;24,25,\\
\in \{n-5,n-4\}, & n=20;\;23;\;26,27,\ldots
\end{array}
\right.
$$
\end{enumerate}
\end{theorem}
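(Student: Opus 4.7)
The plan is to treat parts (a)--(e) in parallel within a single framework. For each of the five admissible shapes $P$ I would (i) prove an upper bound on $s(\mathcal{T})$ for every tiling of $P$ by $n$ equilateral triangles, (ii) determine the domain of admissible values of $n$ by inspecting small configurations, and (iii) exhibit tilings that meet the upper bound for every $n$ in the domain. The visible numerical ``deficits'' $n-s=5,4,4,3$ in parts (a)--(d) and $n-s\in\{4,5\}$ in part (e) strongly suggest that a unified boundary analysis drives the upper bound.

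For the upper bound I would begin with a local inspection of the corners and edges of $P$. Every corner of interior angle $\pi/3$ is covered by a single tile, while a corner of angle $2\pi/3$ is covered by either one or two tiles. Walking along each side of $P$, the tiles touching that side form a zigzag of alternating orientation, and a simple telescoping identity relates the sizes of the up-pointing and down-pointing tiles along the edge to the length of the edge. Combining this with the non-existence of perfect tilings established by Tutte, Buchman and Tuza (recalled in the introduction and freely available), I expect to bound $n - s(\mathcal{T})$ from below by a constant depending only on the number and type of corners of $P$: the constant would be $5$ for the triangle, $4$ for trapezoids and parallelograms, $3$ for pentagons, and at least $4$ for hexagons. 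The argument will necessarily split into subcases according to the position of the smallest tile, which cannot touch the boundary except in very controlled ways, and according to which tile is pinned at each corner; I expect tedium here rather than real difficulty.

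For the lower bounds the strategy is inductive: a handful of explicit base tilings realize the claimed bound for small $n$, and then I would produce a tiling for $n+1$ from one for $n$ by one of two atomic moves, namely subdividing a single tile into three smaller tiles of two new sizes, and adjoining a strip of two congruent tiles of a previously unused size along an appropriate side of $P$. Together these moves and a few base cases should cover every admissible parity and residue and thereby produce the stated values of $\STRI,\STRA,\SPAR,\SPENT$ and most of $\SHEX$. The exceptional exclusions in the domains---$\{2,3,5\}\cap\DOM(\STRI)=\emptyset$, $\{1,2,4\}\cap\DOM(\STRA)=\emptyset$, $\{1,3\}\cap\DOM(\SPAR)=\emptyset$, and so on---would be verified by enumeration of tilings with a small prescribed number of tiles, which is finite by the angle constraints at each interior vertex.

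The hard part is (e). A hexagon has six $2\pi/3$-corners, so the corner analysis is much weaker than for the other shapes, and both deficits $4$ and $5$ genuinely occur as $n$ varies. The upper bound $\SHEX(n)\le n-4$ should still follow from the common framework together with an extra global constraint, perhaps coming from a Brooks--Smith--Stone--Tutte-type electrical-network reformulation of the tiling. The equality $\SHEX(n)=n-4$ for the listed ranges $n=9,\ldots,19$, $n=21,22$, $n=24,25$ should be matched by an inductive construction of hexagonal tilings with exactly four pairs of congruent tiles. The residual values $n\in\{20,23\}\cup\{26,27,\ldots\}$ are where the plan breaks down: for these $n$ I do not see a uniform way to build a hexagonal tiling of the required type using $n-4$ sizes, and I anticipate that the true value of $\SHEX(n)$ on this residual set is a genuinely open sub-problem, exactly as the two-valued conclusion of the theorem indicates.
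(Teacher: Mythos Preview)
Your plan has genuine gaps on both the upper-bound and the lower-bound sides.

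\textbf{Upper bound.} The boundary analysis you sketch---telescoping along edges and inspecting the smallest tile---is essentially the classical argument for the \emph{non-existence} of perfect tilings, but it does not by itself yield the quantitative deficit $n-s(\mathcal{T})\ge c$. What actually drives the bound in the paper is a global counting argument: one associates to $\mathcal{T}$ a bipartite planar graph (white nodes for tiles, black nodes for non-$\frac{\pi}{3}$ vertices of $\mathcal{T}$) and applies Euler's formula to show that the number of $4$-faces is at least $m+v_\pi-3$, where $m$ is the number of sides of $P$ and $v_\pi$ the number of $\pi$-vertices of $\mathcal{T}$ on $\partial P$. Each $4$-face corresponds to a pair of tiles sharing a full side, hence of equal size; since these pairs contain no cycle of length below six, they form a forest and force $s(\mathcal{T})\le n-(m+v_\pi-3)$. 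The dependence on $v_\pi$ is essential: the cases with few $\pi$-vertices (e.g.\ $v_\pi\le 4$ for triangles, $v_\pi\le 1$ for hexagons) require separate ad hoc arguments, and your proposal does not isolate $v_\pi$ as a parameter at all. Invoking the Tutte--Buchman--Tuza imperfection theorem as a black box will not close this gap, since that theorem is a consequence of the same mechanism rather than an input to it.

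\textbf{Lower bound.} Your two atomic moves do not do what you need. First, an equilateral triangle cannot be dissected into three equilateral triangles---indeed $3\notin\DOM(\STRI)$, which you yourself list among the exclusions---so the ``subdivide one tile into three'' move is simply unavailable. Second, adjoining a strip of two congruent tiles of one new size takes $(n,s)$ to $(n+2,s+1)$, which \emph{increases} the deficit $n-s$ by one; iterating it cannot maintain a fixed deficit. The construction that works is different: one builds a spiral pentagon from the Padovan recurrence $p(n)=p(n-2)+p(n-3)$, adding a single triangle of a genuinely new size at each step, so that $(n,s)\mapsto(n+1,s+1)$ and the deficit stays at $3$ for pentagons. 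Triangles, trapezoids, parallelograms and hexagons are then obtained by gluing one, two or three further triangles (of sizes already present, or controlled not to collide) onto the spiral pentagon. For the hexagon values $\SHEX(n)=n-4$ at the listed $n$, no inductive scheme is known; the paper simply exhibits a finite catalogue of explicit tilings, and the residual set $\{20,23,26,27,\ldots\}$ is left open precisely because no such examples have been found there.
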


Here only the case of hexagons is not completely solved. All tilings we know to attain the upper bound $\SHEX(n) \le n-4$ ($n \ge 9$) are displayed in the appendix (tilings (a)-(s)). Is this list complete?

The situation of t-perfect tilings appears more difficult. Only for pentagons we have a full solution. However, in the open cases the differences between upper and lower estimates are at most $2$. In the case of hexagons it remains open if there are t-perfect tilings by $12$ or $13$ triangles. We conjecture that $12,13 \notin\DOM\left(\SHEXp\right)$.

\begin{theorem}\label{thm:perfect}
\renewcommand{\labelenumi}{\bf (\alph{enumi})}
\begin{enumerate}
\item 
$\DOM\left(\STRIp\right)=\{1\}\cup\{15,16,\ldots\}$ and
$$\STRIp(n)\left\{
\begin{array}{ll}
=1, & n=1,\\
=n-5, & n=15;\;17,18,\ldots,26;\;28,\\
=n-6, & n=16,\\
\in \{n-6,n-5\}, &n=27;\;29,30,\ldots
\end{array}
\right.
$$

\item
$\DOM\left(\STRAp\right)= \{13,14,\ldots\}$ and
$$
\STRAp(n)\left\{
\begin{array}{ll}
=n-4, & n=14;\;16,17,\ldots,25;\;27,\\
\in\{n-5,n-4\}, &n=13;\;15;\;26;\;28,29,\ldots
\end{array}
\right.
$$

\item
$\DOM\left(\SPARp\right)= \{2\} \cup \{13,14,\ldots\}$ and
$$
\SPARp(n)\left\{
\begin{array}{ll}
=1, & n=2,\\
=n-4, &n=15;\;18,19;\;21,22,23;\;26,\\
\in\{n-5,n-4\}, &n=13,14;\;16,17;\;20;\;24,25;\;27,28,\ldots
\end{array}
\right.
$$

\item
$\DOM\left(\SPENTp\right)= \{12,13,\ldots\}$ and
$$
\SPENTp(n)=n-4 \mbox{ for all } n=12,13,\ldots
$$

\item
$\{11\} \cup \{14,15,\ldots\} \subseteq \DOM\left(\SHEXp\right) \subseteq \{11,12,\ldots\}$ and $$
\SHEXp(n)\left\{
\begin{array}{ll}
=n-4, & n=11;\;14,15;\;17,18,19;\;22,\\
\in \{n-5,n-4\}, & n=16;\;20,21;\;23,\\
\in \{n-6,n-5,n-4\}, & n=24,25,\ldots
\end{array}
\right.
$$
\end{enumerate}
\end{theorem}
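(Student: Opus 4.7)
The plan is to split Theorem~\ref{thm:perfect} into upper bounds, lower bounds, and domain restrictions, and to handle all five parts (a)--(e) in parallel. Since every t-perfect tiling is in particular a tiling, Theorem~\ref{thm:general} already gives $\SASTp(n) \le \SAST(n)$, which matches the claimed upper envelope in each part except for a few sporadic tightenings (most visibly, $\STRIp(16) \le n - 6$ and the restriction $\SHEXp(n) \le n - 4$ in cases where Theorem~\ref{thm:general} only supplies $n-3$). These sharpenings, as well as the domain restrictions for small $n$, are exactly where the t-perfectness hypothesis is brought to bear: in a t-perfect tiling of a convex polygon every size class has at most two tiles, one pointing up and one pointing down, so that if $k = n - s$ denotes the number of duplicated sizes the tiling carries a very restricted pairing structure. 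I would show by local inspection around each duplicated pair, using the angle constraints at vertices where up- and down-triangles meet, that certain combinations of $(n,s)$ cannot be realised.

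For the lower bounds I would produce a finite library of explicit base t-perfect tilings of each shape whose parameters $(n,s)$ cover one full period in $n$ just above the threshold (e.g.\ $n = 15,\dots,28$ for part~(a)), and then propagate them by an inflation argument. Given a t-perfect tiling $\mathcal{T}$ with parameters $(n,s)$, one rescales $\mathcal{T}$ by a generic factor $\alpha$ and replaces one tile by a small t-perfect subtiling whose tiles are, by the choice of $\alpha$, of sizes not occurring elsewhere in the rescaled $\mathcal{T}$. Each such replacement shifts $(n,s)$ by $(m-1,m-1)$ for some $m$, and iterating with $m \in \{2,3\}$ allows every sufficiently large $n$ in the specified residue classes to attain the predicted $\SASTp(n)$. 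The same inflation, performed inside each of the four other polygon types via a shape-preserving local patch, handles parts (b)--(e) once a small number of suitable base tilings are on hand; the lower-bound part of the domain statements (e.g.\ $15 \in \DOM(\STRIp)$, $11 \in \DOM(\SHEXp)$) then reduces to exhibiting a single explicit tiling, to be given in the appendix referred to in the statement.

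I expect the main obstacle to be the combination of verifying the small-$n$ domain exclusions (such as $\{2,\dots,14\} \cap \DOM(\STRIp) = \emptyset$, and the question of whether $n \in \{12,13\}$ can occur in $\DOM(\SHEXp)$) and producing enough base tilings to seed the inflation. The exclusion arguments require a finite but delicate case analysis that walks through all combinatorial structures of t-perfect tilings with few tiles, potentially leaning on the computational framework of Dr\'apal and H\"am\"al\"ainen \cite{drapal2010,hamalainen}; a purely combinatorial proof would have to track every t-perfect candidate of the given shape up to the respective threshold, which is the genuinely demanding part. The cases where the theorem states only a 2- or 3-valued window (the non-trivial residue classes in parts (a)--(c) and (e)) correspond precisely to values of $n$ where neither the upper-bound argument nor the available inflation constructions close the gap, and I would leave these explicitly as open problems, consistent with the phrasing of the statement.
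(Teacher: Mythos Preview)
Your overall architecture (upper bounds via $\SASTp \le \SAST$, lower bounds via explicit constructions plus propagation, domain exclusions via case analysis) matches the paper's, but the propagation step for the lower bounds does not work as stated. You propose replacing one tile by a t-perfect subtiling with $m\in\{2,3\}$ pieces, but a single equilateral triangle admits no t-perfect tiling by $2,3,\ldots,14$ equilateral triangles; indeed $\DOM(\STRIp)=\{1\}\cup\{15,16,\ldots\}$ is part of what you are proving. Any nontrivial t-perfect subdivision of a tile therefore adds at least $14$ tiles, so iterating such a step cannot hit every $n$ above the threshold. Moreover the claimed shift $(m-1,m-1)$ in $(n,s)$ would require the subtiling to have all tiles of distinct sizes, i.e.\ to be isometrically perfect, which is impossible by Tutte's theorem. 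The paper's lower-bound engine is entirely different: it builds a t-perfect spiral pentagon $Q_n$ from a Padovan-type recursion $q(n)=q(n-3)+q(n-2)$ with carefully chosen initial values $q(8)=8$, $q(9)=11$, $q(10)=9$, producing a t-perfect tiling of a pentagon by $n$ triangles with exactly $n-4$ sizes for every $n\ge 12$; one then attaches one, two, or five further triangles of specified sizes to $Q_n$ to obtain trapezoids, parallelograms, triangles, and hexagons. The point is that the spiral advances $n$ by~$1$ at each step, which your inflation cannot do.

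You also misidentify where the upper bound needs sharpening beyond Theorem~\ref{thm:general}. Theorem~\ref{thm:general}(e) already gives $\SHEX(n)\le n-4$ for $n\ge 9$, so nothing extra is needed for hexagons. The genuine gap is for pentagons: Theorem~\ref{thm:general}(d) gives only $\SPENT(n)=n-3$, whereas Theorem~\ref{thm:perfect}(d) asserts $\SPENTp(n)=n-4$. The paper closes this with a short minimal-counterexample argument (remove the exposed tile; by Theorem~\ref{thm:general} the remainder must again be a pentagon, contradicting minimality). The sporadic bound $\STRIp(16)\le 10$ is not re-derived but quoted from the computer enumeration in~\cite{drapal2010}.
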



\section{Proof of Theorem~\ref{thm:general}}

\subsection{Spiral pentagons and related tilings}

The \emph{Padovan spiral numbers} \cite{steward1996, oeis} are defined recursively by
\begin{equation}\label{eq:Pad}
p(0)=p(1)=p(2)=1 \quad\mbox{and}\quad p(n)=p(n-3)+p(n-2) \mbox{ for } n=3,4,\ldots
\end{equation}

\begin{lemma}\label{lem:spiral}
\renewcommand{\labelenumi}{\bf (\alph{enumi})}
\begin{enumerate}
\item
The Padovan spiral numbers satisfy 
\begin{itemize}
\item
$p(0)=p(1)=p(2)=1$, $p(3)=p(4)=2$ and $p(n)>p(n-1)$ for $n \ge 5$,
\item
$p(n-3) < \frac{1}{2}p(n) < p(n-2)$ for $n \notin \{3,4,6\}$.
\end{itemize}
\item
For every $n \in \{4,5,\ldots\}$, there is a convex pentagon $P_n$ with sides of lengths $p(n-4)$, $p(n-3)$, $p(n-2)$, $p(n-1)$ and $p(n)$ that admits a tiling by $n$ equilateral triangles $T_i$ with $\SIZE(T_i)=p(i-1)$, $i=1,\ldots,n$ (see \cite{steward1996}).
\end{enumerate}
\end{lemma}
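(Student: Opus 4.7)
My plan is to read off both bullets directly from the recursion. The initial values $p(0)=p(1)=p(2)=1$, $p(3)=p(4)=2$, $p(5)=3$ are immediate. For strict monotonicity at $n\ge 5$ I rewrite
\[
p(n)-p(n-1)=\bigl(p(n-3)+p(n-2)\bigr)-\bigl(p(n-4)+p(n-3)\bigr)=p(n-2)-p(n-4),
\]
and use $p(k)=p(k-2)+p(k-3)\ge p(k-2)+1>p(k-2)$ for every $k\ge 3$. A single substitution shows that both inequalities $2p(n-3)<p(n)$ and $p(n)<2p(n-2)$ are equivalent to $p(n-3)<p(n-2)$. Scanning the initial segment, the strict inequality $p(k)<p(k+1)$ fails exactly at $k\in\{0,1,3\}$, so $p(n-3)<p(n-2)$ holds precisely for $n\notin\{3,4,6\}$.

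\textbf{Part (b).} I will argue by induction on $n$, carrying the strengthened invariant that $P_n$ is a convex pentagon whose sides are $p(n-4),p(n-3),p(n-2),p(n-1),p(n)$ in cyclic order, with four interior angles of $\tfrac{2\pi}{3}$ and the unique $\tfrac{\pi}{3}$ angle at the vertex joining the two longest sides $p(n-1)$ and $p(n)$. For the base $n=4$ I write down an explicit construction: two unit triangles form a rhombus; a third unit triangle attached along the appropriate edge collapses the figure to a trapezoid with sides $1,1,1,2$ (one rhombus vertex acquires an interior angle of $\pi$ and disappears); a final equilateral triangle of size $2$ glued along the side of length $2$ produces $P_4$ with sides $1,1,1,2,2$ and its $\tfrac{\pi}{3}$ angle at the apex of the new triangle. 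For the inductive step I attach $T_{n+1}$ of side $p(n)$ externally to the side of $P_n$ of length $p(n)$. At the $\tfrac{\pi}{3}$ endpoint the interior angle rises to $\tfrac{2\pi}{3}$; at the opposite endpoint a $\tfrac{2\pi}{3}$ angle flanked by the shortest side $p(n-4)$ becomes $\pi$, so that vertex disappears and the side of length $p(n-4)$ fuses with one side of $T_{n+1}$ into a single segment of length
\[
p(n-4)+p(n)=p(n-4)+p(n-3)+p(n-2)=p(n-1)+p(n-2)=p(n+1).
\]
The apex of $T_{n+1}$ then carries a new $\tfrac{\pi}{3}$ angle between the sides of lengths $p(n)$ and $p(n+1)$, so the invariant is preserved for $P_{n+1}$, and the tiling is extended by the single triangle $T_{n+1}$ of size $p((n+1)-1)=p(n)$.

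\textbf{Expected obstacle.} The only genuine work is the bookkeeping: I must verify that the shortest side $p(n-4)$ really does sit on the correct flank of the $\tfrac{2\pi}{3}$ endpoint of the side of length $p(n)$, so that the fused segment actually has length $p(n+1)$, and that $T_{n+1}$ can be placed on the far side of the chosen edge without overlapping $P_n$. Both points reduce to the invariant together with convexity of $P_n$; the latter guarantees that the open half-plane across the boundary edge misses $P_n$. Convexity of $P_{n+1}$ is then automatic because every interior angle continues to lie in $\{\tfrac{\pi}{3},\tfrac{2\pi}{3}\}\subset(0,\pi)$.
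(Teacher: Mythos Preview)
Your proof is correct and follows essentially the same approach as the paper's: part~(a) is deduced directly from the recursion $p(n)=p(n-3)+p(n-2)$, and part~(b) is the spiral construction of \cite{steward1996} in which $P_{n+1}$ is obtained by attaching a triangle of side $p(n)$ to the longest side of $P_n$. The paper's proof is extremely terse (it essentially just cites the reference and states the inductive step), so your version supplies the bookkeeping---the explicit base case $P_4$, the fused side-length identity $p(n-4)+p(n)=p(n+1)$, and the invariant on the location of the $\tfrac{\pi}{3}$ angle---that the paper leaves implicit.
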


\begin{proof}
The first part of (a) follows from \eqref{eq:Pad} by induction. It implies the second one by $p(n)=p(n-3)+p(n-2)$.
Claim (b) can be found in \cite{steward1996}, see Figure~\ref{fig:spiral}: the pentagon $P_{n+1}$ is obtained from $P_n$ by adding a triangle of size $p(n)$ at the longest side of $P_n$.
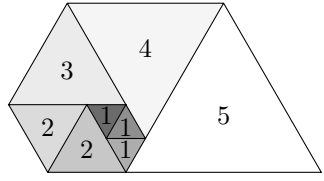
\begin{figure}
\begin{center}
\begin{tikzpicture}[xscale=.1299,yscale=.15]

\fill[black!4]
  (22,15)--(6,15)--(14,3)--cycle
  ;
\fill[black!8]
  (6,15)--(0,6)--(12,6)--cycle
  ;
\fill[black!14]
  (0,6)--(4,0)--(8,6)--cycle
  ;
\fill[black!22]
  (4,0)--(12,0)--(8,6)--cycle
  ;
\fill[black!32]
  (12,0)--(14,3)--(10,3)--cycle
  ;
\fill[black!44]
  (14,3)--(10,3)--(12,6)--cycle
  ;
\fill[black!58]
  (10,3)--(12,6)--(8,6)--cycle
  ;

\draw 
  (0,6)--(4,0)--(32,0)--(22,15)--(6,15)--(0,6)--(12,6)--(10,3)--(14,3)--(6,15)
  (4,0)--(8,6)--(12,0)--(22,15)
  (10,5) node {$1$}
  (12,4) node {$1$}
  (12,2) node {$1$}
  (8,2) node {$2$}
  (4,4) node {$2$}
  (6,9) node {$3$}
  (14,11) node {$4$}
  (22,5) node {$5$}
  ;

\end{tikzpicture}
\end{center}
\caption{The spiral pentagon $P_n$ (here with with $n=8$), cf. \cite{steward1996}}
\label{fig:spiral}
\end{figure}
\end{proof}

\begin{corollary}\label{cor:lower}
\renewcommand{\labelenumi}{\bf (\alph{enumi})}
\begin{enumerate}
\item 
$\{6,7,\ldots\}\subseteq\DOM(\STRI)$ and 
$$
\STRI(n)\ge\left\{
\begin{array}{cl}
2, & n=6,\\
n-5, &n=7,8,\ldots
\end{array}
\right.
$$

\item
$\{5,6,\ldots\}\subseteq\DOM(\STRA)$ and
$$
\STRA(n)\ge\left\{
\begin{array}{cl}
2, & n=5,\\
n-4, &n=6,7,\ldots
\end{array}
\right.
$$

\item
$\{5,6,\ldots\}\subseteq\DOM(\SPAR)$ and
$$
\SPAR(n)\ge\left\{
\begin{array}{cl}
2, & n=5,\\
n-4, &n=6,7,\ldots
\end{array}
\right.
$$

\item
$\{4,5,\ldots\}\subseteq\DOM(\SPENT)$ and
$$
\SPENT(n)\ge\left\{
\begin{array}{cl}
2, & n=4,\\
n-3, &n=5,6,\ldots
\end{array}
\right.
$$

\item
$\{7,8,\ldots\}\subseteq\DOM(\SHEX)$ and $\SHEX(n) \ge n-5$ for all $n=7,8,\ldots$
\end{enumerate}
\end{corollary}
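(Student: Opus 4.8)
The plan is to derive all five lower bounds from the single family of spiral pentagons $P_n$ supplied by Lemma~\ref{lem:spiral}(b), whose $n$ tiles have sizes $p(0),\dots,p(n-1)$. First I would read off the number of distinct sizes: by Lemma~\ref{lem:spiral}(a) the value $1$ occurs exactly for $p(0),p(1),p(2)$, the value $2$ exactly for $p(3),p(4)$, and $p(5)<p(6)<\dots<p(n-1)$ are pairwise distinct and exceed $2$. Hence $\{\SIZE(T_1),\dots,\SIZE(T_n)\}$ has $n-3$ elements for $n\ge 5$ (and $2$ elements for $n=4$), which is exactly statement (d): $\SPENT(n)\ge n-3$ for $n\ge 5$, $\SPENT(4)\ge 2$, and $\{4,5,\dots\}\subseteq\DOM(\SPENT)$. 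I would also record the shape of $P_n$: its unique $\frac{\pi}{3}$-vertex is flanked by the two longest sides $p(n)$ and $p(n-1)$, while the remaining three sides, of lengths $p(n-4),p(n-3),p(n-2)$ in cyclic order, each join two $\frac{2\pi}{3}$-vertices.

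For (b) and (c) I would glue a single equilateral tile onto one of these three sides $e$, with the tile's side length equal to $|e|$. Since both endpoints of $e$ carry angle $\frac{2\pi}{3}$, after gluing they become straight ($\frac{2\pi}{3}+\frac{\pi}{3}=\pi$) and the union is a convex quadrilateral whose four angles are $\frac{\pi}{3}$ (the old vertex), $\frac{\pi}{3}$ (the new apex) and two $\frac{2\pi}{3}$. Gluing to an outer side ($p(n-4)$ or $p(n-2)$) leaves the two $\frac{\pi}{3}$-angles adjacent, giving a genuine (non-parallelogram) trapezoid; gluing to the middle side $p(n-3)$, which is opposite the $\frac{\pi}{3}$-vertex, leaves them opposite, giving a parallelogram. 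In each case $|e|\le p(n-2)<p(n)$ already occurs among the $T_i$, so the number of distinct sizes stays $n-3$ while the tile count becomes $n+1$; thus $\STRA(n+1),\SPAR(n+1)\ge n-3=(n+1)-4$ and $\{5,6,\dots\}\subseteq\DOM(\STRA)\cap\DOM(\SPAR)$.

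Statement (a) rests on the observation that $P_n$ is an equilateral triangle of side $p(n-1)+p(n-2)$ with two corners removed: the flanks $p(n),p(n-1)$ of the $\frac{\pi}{3}$-vertex, the middle side $p(n-3)$, and the two cut edges $p(n-4),p(n-2)$ fit together precisely because of the recurrence (e.g.\ $p(n)-p(n-2)=p(n-3)$ and $p(n-1)+p(n-2)=p(n)+p(n-4)$). Restoring the two corner tiles, of sizes $p(n-4)$ and $p(n-2)$, both already present, produces a triangle tiled by $n+2$ tiles with the same $n-3$ distinct sizes, so $\STRI(n+2)\ge n-3=(n+2)-5$ and $\{6,7,\dots\}\subseteq\DOM(\STRI)$. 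The cases $n=4$ (where $P_4$ has sizes $1,1,1,2$) give the small values $\SPENT(4),\STRA(5),\SPAR(5)\ge 2$ and $\STRI(6)\ge 2$.

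The hexagon (e) is where I expect the real difficulty. A hexagon is an equilateral triangle with its three corners truncated, equivalently the $\frac{\pi}{3}$-corner of a pentagon cut off; and a truncation turns that $\frac{\pi}{3}$-vertex into two $\frac{2\pi}{3}$-vertices only if the corner tile is strictly shorter than both sides it lies on. Here the spiral is an obstacle rather than a help: its extreme tiles meet the sides exactly, the big tile at the $\frac{\pi}{3}$-vertex has size $p(n-1)$ and spans the whole neighbouring side $p(n-1)$, and in the triangle completion the corner tiles $p(n-1),p(n-2)$ abut along a side of length $p(n-1)+p(n-2)$ with no gap. Consequently every naive truncation collapses the hexagon back to a pentagon, and, as I would verify, any attempt to create the missing slack by subdividing the offending corner tile only injects further tiles of sizes already present, which spoils the count $n-5$. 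The crux is therefore to produce a hexagon tiling that attains as many distinct sizes as the spiral but has a strictly sub-maximal tile at (at least) one prospective $\frac{2\pi}{3}$-corner; I would approach this through an explicit small base hexagon (for instance the equilateral triangle of side $4$ with corners $1,1,2$ removed, tiled by one tile of size $2$ and six of size $1$: seven tiles, two sizes, giving $\SHEX(7)\ge 2$) together with an enlargement step, tailored to the hexagonal shape, that adds one tile of a new largest size at a time. Designing and verifying that enlargement step, so that it keeps the region a convex hexagon and raises the distinct-size count by exactly one, is the main obstacle, and it fits the fact that the hexagon domain in the statement starts one step later, at $n=7$.
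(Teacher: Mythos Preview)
Your treatment of (a)--(d) is correct and coincides with the paper's argument: the spiral pentagon $P_n$ with $n-3$ distinct sizes is used directly for (d), completed to a triangle by restoring the two corner tiles of sizes $p(n-4)$ and $p(n-2)$ for (a), and extended by a single triangle on the side of length $p(n-2)$ (resp.\ $p(n-3)$) for the trapezoid (b) (resp.\ parallelogram (c)). The small values at $n=4$ are also handled correctly.

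Part (e), however, is incomplete --- and you say so yourself. You describe the obstruction to naive truncation accurately and propose a base case plus an unspecified ``enlargement step'', but never supply one. The paper's construction avoids the whole issue: instead of truncating, it attaches a \emph{trapezoidal strip of three triangles} of size $\tfrac{1}{2}p(n)$ to the longest side $p(n)$ of $P_n$ (for $n\neq 6$; a minor variant handles $n=6$). Geometrically this turns the $\tfrac{\pi}{3}$-vertex into a $\tfrac{2\pi}{3}$-vertex and straightens the adjacent $\tfrac{2\pi}{3}$-vertex, producing a hexagon with $n+3$ tiles. The crucial arithmetic point is the second item of Lemma~\ref{lem:spiral}(a), $p(n-3)<\tfrac{1}{2}p(n)<p(n-2)$ for $n\notin\{3,4,6\}$, which guarantees that $\tfrac{1}{2}p(n)$ is a genuinely new size, so the distinct-size count rises to $(n-3)+1=(n+3)-5$. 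For $n=4$ the half-size $\tfrac{1}{2}p(4)=1$ is already present, which is exactly what is needed for $\SHEX(7)\ge 2$. Thus no separate inductive enlargement is required; the spiral is still the engine, and Lemma~\ref{lem:spiral}(a) is not merely a bookkeeping fact about distinctness of the $p(i)$ but the key that unlocks the hexagon case.
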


\begin{proof}
The pentagons $P_n$, $n= 4,5,\ldots$, from Lemma~\ref{lem:spiral}(b) prove (d). For (a), we add two triangles of sizes $p(n-4)$ and $p(n-2)$ at the respective sides of $P_n$, cf. Figure~\ref{fig:derived polygons}.
\begin{figure}
\begin{center}
\begin{tikzpicture}

\node at (0,0) {\tikz[xscale=.039,yscale=.045]{
\draw 
  (0,6)--(4,0)--(32,0)--(22,15)--(6,15)--(0,6)
  (0,6)--(-4,0)--(4,0)
  (6,15)--(14,27)--(22,15)
  (15,7.5) node {$P_n$}
  ;
}}
;
\node at (2.5,0) {\tikz[xscale=.039,yscale=.045]{
\draw 
  (0,6)--(4,0)--(32,0)--(22,15)--(6,15)--(0,6)
  (0,6)--(-4,0)--(4,0)
  (15,7.5) node {$P_n$}
  ;
}}
;
\node at (5,0) {\tikz[xscale=.039,yscale=.045]{
\draw 
  (0,6)--(4,0)--(32,0)--(22,15)--(6,15)--(0,6)
  (0,6)--(-6,15)--(22,15)
  (15,7.5) node {$P_n$}
  ;
}}
;
\node at (7.5,0) {\tikz[xscale=.039,yscale=.045]{
\draw 
  (0,6)--(4,0)--(32,0)--(22,15)--(6,15)--(0,6)
  (4,0)--(11,-10.5)--(25,-10.5)--(32,0)
  (11,-10.5)--(18,0)--(25,-10.5)
  (15,7.5) node {$P_n$}
  ;
}}
;
\node at (0,-1) {(a)};
\node at (2.5,-1) {(b)};
\node at (5,-1) {(c)};
\node at (7.5,-1) {(e)};
\end{tikzpicture}
\end{center}
\caption{Modifications of $P_n$ by added equilateral triangles}
\label{fig:derived polygons}
\end{figure}
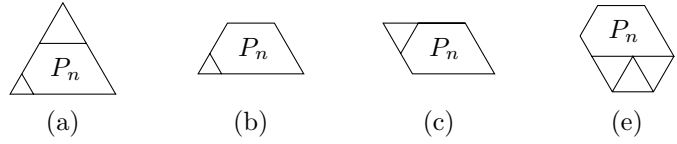
For (b), we add only one of these two triangles. For (c), we add a triangle of size $p(n-3)$. Every added triangle is of the same size as one of the original tiling of $P_n$ from Lemma~\ref{lem:spiral}(b). 

For (e), we add three triangles of size $\frac{1}{2}p(n)$ at the longest side of length $p(n)$ if $n \ne 6$, see Figure~\ref{fig:derived polygons}. By Lemma~\ref{lem:spiral}(a), the size of the new triangles differs from all sizes of the given tiling of $P_n$ if $n=5$ or $n \ge 7$, whereas that size appears already in the tiling of $P_n$ if $n=4$. For $n=6$, we add three triangles of size $\frac{1}{2}p(5)=\frac{3}{2}\notin\{p(0),\ldots,p(5)\}$ at the side of size $p(5)=3$ of $P_6$.
\end{proof}


\subsection{Domains and lower estimates\label{subsec:realizations}}

If a convex polygon $P$ has a tiling $\mathcal{T}$ by equilateral triangles, its inner angles are of sizes $\frac{\pi}{3}$ and $\frac{2\pi}{3}$. A side between two vertices of sizes $\alpha$ and $\beta$ is called an \emph{$(\alpha,\beta)$-side of $P$}.
All vertices of triangles from $\mathcal{T}$ are called \emph{vertices of $\mathcal{T}$}. We speak of \emph{$\frac{\pi}{3}$-vertices or $\frac{2\pi}{3}$-vertices of $\mathcal{T}$}, if they coincide with vertices of $P$ of the respective sizes, of \emph{$\pi$-vertices of $\mathcal{T}$}, if they are no vertices of $P$ but on the boundary of $P$, and of \emph{$2\pi$-vertices of $\mathcal{T}$} if they are in the interior of $P$. A triangle $T \in \mathcal{T}$ is called \emph{exposed} if it contains a $\frac{\pi}{3}$-vertex of $\mathcal{T}$ or, equivalently, if $P \setminus T$ is still convex.

Let us show that
\begin{eqnarray}
2,3;\; 5 & \notin & \DOM(\STRI),\label{eq:dom3}\\
1,2;\; 4 & \notin & \DOM(\STRA),\label{eq:dom4t}\\
1;\; 3 & \notin & \DOM(\SPAR),\label{eq:dom4p}\\
1,2,3 & \notin & \DOM(\SPENT),\label{eq:dom5}\\
1,2,3,4,5 & \notin & \DOM(\SHEX)\label{eq:dom6}.
\end{eqnarray}

To see \eqref{eq:dom3}, let $n$ be the cardinality of a tiling $\mathcal{T}$ of some triangle $T$ and let $v_\pi$ be the number of $\pi$-vertices of $\mathcal{T}$. If $v_\pi=0$ then $n=1$. If $1 \le v_\pi \le 3$ then $v_\pi=3$ and $\mathcal{T}$ splits into three congruent exposed triangles and a tiling $\mathcal{T}'$ of the remaining triangle $T'$ of $T$. The cardinality $|\mathcal{T}'|=n-3$ cannot be two, whence $n=4$ (if $|\mathcal{T}'|=1$) or $n \ge 6$ (if $|\mathcal{T}'| \ge 3$). If $v_\pi \ge 4$ then one side of $T$ contains two consecutive $\pi$-vertices $v_1,v_2$. Hence $n \ge 6$, because $\mathcal{T}$ contains three exposed triangles, one triangle covering the segment $v_1v_2$ and two triangles for covering the yet uncovered remainders of neighbourhoods of $v_1$ and $v_2$.

For \eqref{eq:dom4t}, note that $n \in \DOM(\STRA)$ implies $n+1 \in \DOM(\STRI)$, since a trapezoid can be transformed into a triangle by adding one triangle at its $\left(\frac{2\pi}{3},\frac{2\pi}{3}\right)$-side. Hence \eqref{eq:dom3} implies \eqref{eq:dom4t}. Similarly, $n \in \DOM(\SPAR)$ implies $n+1 \in \DOM(\STRA)$ by adding a suitable triangle. Thus $\eqref{eq:dom4p}$ is a consequence of $\eqref{eq:dom4t}$.

A tiling of a convex pentagon contains at least four tiles, because it must contain one exposed triangle at the $\frac{\pi}{3}$-vertex and at least one tile for each of the three $\left(\frac{2\pi}{3},\frac{2\pi}{3}\right)$-sides. This gives \eqref{eq:dom5}. Similarly, we obtain \eqref{eq:dom6}, since a tiling of a convex hexagon contains at least one tile for each of the six sides of the hexagon. 

Claims \eqref{eq:dom3}-\eqref{eq:dom6} together with Figure~\ref{fig:small tilings} and Corollary~\ref{cor:lower} show that the domains of $\STRI$, $\STRA$, $\SPAR$, $\SPENT$ and $\SHEX$ are as claimed in Theorem~\ref{thm:general}.
\begin{figure}
\begin{center}
\begin{tikzpicture}[xscale=.35,yscale=.3031]

\draw 
  (1.5,1)--(3.5,1)--(2.5,3)--(1.5,1)
  (4,0)--(8,0)--(6,4)--(4,0) (6,0)--(7,2)--(5,2)--(6,0)
  
  (11,3)--(10,1)--(14,1)--(13,3)--(11,3)--(12,1)--(13,3)
  
  (17,3)--(16,1)--(18,1)--(19,3)--(17,3)--(18,1)
  (20,3)--(19,1)--(23,1)--(24,3)--(20,3)--(21,1)--(22,3)--(23,1)
  
  (26,2)--(27,0)--(29,0)--(30,2)--(29,4)--(27,4)--(26,2)--(30,2) (27,0)--(29,4) (29,0)--(27,4)
  ;
\end{tikzpicture}
\end{center}
\caption{Tilings of small cardinality and with congruent tiles}
\label{fig:small tilings}
\end{figure}
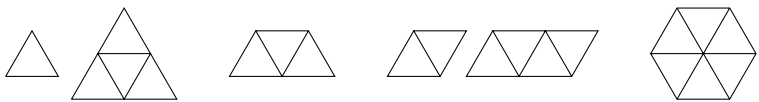
Moreover, Corollary~\ref{cor:lower} gives the lower estimates of these functions from Theorem~\ref{thm:general} apart from $\SHEX(n) \ge n-4$ for $n \in \{9,\ldots,19\} \cup \{21,22,24,25\}$. These last estimates are justified by the tilings (a)-(s) presented in the appendix. We do not know other tilings of hexagons that realize $\SHEX(n) \ge n-4$. Table~\ref{tab:hexagons} gives the parameters of the illustrated tilings.


\subsection{A necessary condition}

\begin{lemma}\label{lem:necessary}
Let $\mathcal{T}$ be a tiling of a convex $m$-gon $P$ by at least two equilateral triangles, and let $v_\pi$ be the number of $\pi$-vertices of $\mathcal{T}$. Then there are at least $m+v_\pi-3$ pairs of distinct triangles from $\mathcal{T}$ having a side in common.
\end{lemma}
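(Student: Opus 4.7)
I will set up the planar graph $G$ induced by the tiling (vertices $=$ vertices of $\mathcal{T}$; edges $=$ maximal subsegments of triangle boundaries not containing a vertex in their interior; faces $=$ triangles together with the unbounded face) and apply Euler's formula. The goal is to express a lower bound for the number $N$ of pairs of triangles sharing a full side in terms of Euler-theoretic quantities, and then to control the residual terms by local angle arguments.

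With $V=m+v_\pi+v_{2\pi}$ and $F=n+1$, Euler gives $E=m+v_\pi+v_{2\pi}+n-1$. A short angle argument rules out a $\pi$-vertex $v$ in the interior of a triangle side: otherwise one triangle alone would contribute the flat angle $\pi$ available at a boundary vertex, leaving no room for another triangle to have $v$ as a corner, contradicting $v$ being a vertex of $\mathcal{T}$. Consequently every side of $P$ is partitioned into boundary edges of $G$ by its $\pi$-vertices only, so $G$ has $m+v_\pi$ boundary edges and $e_i=v_{2\pi}+n-1$ interior edges.

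Next, for each $T\in\mathcal{T}$ I classify its three sides as (i) lying on $\partial P$, (ii) an \emph{undivided interior side} (coinciding with a single interior edge of $G$), or (iii) a \emph{subdivided interior side}. Let $\beta,\gamma$ denote the total numbers of type (ii), (iii) sides, summed over $T$; then $\beta+\gamma=3n-m-v_\pi$. An interior edge of $G$ is an undivided side of $0$, $1$ or $2$ of its two adjacent triangles, and the edges with count $2$ are in bijection with the shared-side pairs counted by $N$. Summing the counts over interior edges yields $\beta\le 2N+(e_i-N)=N+e_i$, hence
\[
N\ge \beta-e_i = 2n-m-v_\pi-\gamma-v_{2\pi}+1.
\]

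Finally, I bound $\gamma+v_{2\pi}$ from above. Double counting edge-incidences with triangles, $\sum_T\#\{\text{edges on }\partial T\}=2e_i+e_\partial=3n+\sigma$, gives $\sigma=m+v_\pi+2v_{2\pi}-n-2$, where $\sigma$ is the total number of incidences (triangle, side, interior vertex on that side). Every subdivided side carries at least one subdivision, so $\gamma\le\sigma$. At each subdivision point $v$ some triangle contributes the flat angle $\pi$, which is impossible at vertices of $P$ and at $\pi$-vertices; for a $2\pi$-vertex, letting $a(v),b(v)$ count the triangles meeting $v$ as a corner, resp.\ on the interior of a side, the angle identity reads $a(v)+3b(v)=6$ with $a(v)\ge 1$, hence $b(v)\le 1$. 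Thus $\sigma\le v_{2\pi}$, which combined with the formula for $\sigma$ yields $v_{2\pi}\le n+2-m-v_\pi$, and therefore
\[
\gamma+v_{2\pi}\le\sigma+v_{2\pi}\le 2v_{2\pi}\le 2(n+2-m-v_\pi);
\]
plugging this into the bound for $N$ produces $N\ge m+v_\pi-3$. The main subtlety I anticipate is that an interior edge of $G$ need not be a full side of \emph{either} neighbouring triangle (this occurs precisely when the two longer sides extend past opposite endpoints of the edge); this causes no harm because such edges only decrease $\beta$, leaving the required one-sided inequality $\beta\le N+e_i$ intact.
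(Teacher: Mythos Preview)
Your proof is correct and complete; in particular your angle identity $a(v)+3b(v)=6$ at interior vertices is the key local constraint, and the chain $\gamma\le\sigma\le v_{2\pi}\le n+2-m-v_\pi$ closes the argument cleanly.

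The paper takes a genuinely different route. It builds a \emph{bipartite} planar graph $\Gamma$ whose white nodes are the triangles and whose black nodes are the vertices of $\mathcal{T}$ other than the $\tfrac{\pi}{3}$-vertices, joined when the vertex is a corner of the triangle. In this graph a $4$-face corresponds exactly to a pair of triangles sharing a side, so the goal becomes $f_4\ge m+v_\pi-3$. Euler's formula, together with degree counts ($v_2=m$, $2e=2v_2+3v_3+6v_6$) and the fact that the unbounded face has $2(2m-6+v_\pi)$ edges, yields this inequality after a short manipulation. The paper's construction is in the Tutte--Buchman tradition and is a bit more conceptual, since the shared-side pairs appear as a single face count rather than via the auxiliary quantities $\beta,\gamma,\sigma$; on the other hand your argument works directly on the natural tiling graph and does not need the reduction step $v_\pi\ge 9-2m$ that the paper invokes to treat the degenerate rhombus case separately.
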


\begin{proof}
We can assume that
\begin{equation}\label{eq:1}
v_\pi \ge 9-2m.
\end{equation}
Indeed, this is trivial if $m \ge 5$. If $m=3$ then $v_\pi \ge 3$, since the trivial tiling $\mathcal{T}=\{P\}$ is excluded. If $m=4$ and $v_\pi=0$ then $\mathcal{T}$ is a tiling of a rhombus $P$ by two equilateral triangles, and the claim of the lemma is obvious.

The following graph-theoretic arguments generalize a similar approach from \cite{tutte1948, buchman1981}. We associate a bipartite planar graph $\Gamma$ to $\mathcal{T}$: In the interior of every triangle from $\mathcal{T}$ we place a white node of $\Gamma$. All vertices of $\mathcal{T}$, except for the $\frac{\pi}{3}$-vertices, are the black nodes of $\Gamma$. A black and a white node of $\Gamma$ are joined by an edge if the black one is a vertex of the triangle represented by the white one, see Figure~\ref{fig:graph}. \begin{figure}
\begin{center}
\begin{tikzpicture}[xscale=.5,yscale=.5]

\draw[densely dotted]
  (2,3.464)--(0,0)
  (0,0)--(6,0)
  (6,0)--(8,3.464)
  (8,3.464)--(2,3.464)--(4,0)--(6,3.464)--(7,1.732)--(3,1.732)--(4,3.464)--(6,0)
  ;
  
\draw 
  (2,1.155) circle (.2)
  (2,3.464)--(2,1.355) 
  (4,0)--(2.173,1.055) 
  (4,1.155) circle (.2)
  (4,.955)--(4,0)
  (3.827,1.255)--(3,1.732)
  (4.173,1.255)--(5,1.732)
  (6,1.155) circle (.2)
  (6,.955)--(6,0)
  (5.827,1.255)--(5,1.732)
  (6.173,1.255)--(7,1.732)
  (5,.577) circle (.2)
  (5,.777)--(5,1.732)
  (4.827,.477)--(4,0)
  (5.173,.477)--(6,0)
  (3,2.887) circle (.2)
  (3,2.687)--(3,1.732)
  (2.827,2.987)--(2,3.464)
  (3.173,2.987)--(4,3.464)
  (5,2.887) circle (.2)
  (5,2.687)--(5,1.732)
  (4.827,2.987)--(4,3.464)
  (5.173,2.987)--(6,3.464)
  (7,2.887) circle (.2)
  (7,2.687)--(7,1.732)
  (6.827,2.987)--(6,3.464)
  (4,2.309) circle (.2)
  (4,2.509)--(4,3.464)
  (3.827,2.209)--(3,1.732)
  (4.173,2.209)--(5,1.732)
  (6,2.309) circle (.2)
  (6,2.509)--(6,3.464)
  (5.827,2.209)--(5,1.732)
  (6.173,2.209)--(7,1.732)
  ;

\fill
  (2,3.464) circle (.2)
  (4,3.464) circle (.2)
  (6,3.464) circle (.2)
  (3,1.732) circle (.2)
  (5,1.732) circle (.2)
  (7,1.732) circle (.2)
  (4,0) circle (.2)
  (6,0) circle (.2)
  ;

\end{tikzpicture}
\end{center}
\caption{A tiling of a parallelogram (dotted) and the associated graph}
\label{fig:graph}
\end{figure}
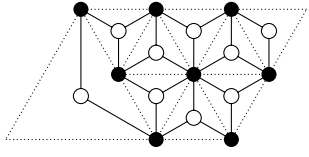
Let $v$, $e$ and $f$ be the numbers of nodes, edges and faces of $\Gamma$, respectively.

All nodes of $\Gamma$ have degree $2$, $3$ or $6$. Denoting the respective numbers by $v_2$, $v_3$ and $v_6$, we have
\begin{equation}\label{eq:2}
v=v_2+v_3+v_6.
\end{equation}
Since $P$ is a convex $m$-gon whose inner angles have sizes of $\frac{\pi}{3}$ or $\frac{2\pi}{3}$, it has $6-m$ inner angles of size $\frac{\pi}{3}$ and $2m-6$ inner angles of size $\frac{2\pi}{3}$. A white node of $\Gamma$ has degree $2$ if and only if it represents a triangle that covers an angle of $P$ of size $\frac{\pi}{3}$. A black node of $\Gamma$ has degree $2$ if and only if it corresponds to an angle of $P$ of size $\frac{2\pi}{3}$. Thus
\begin{equation}\label{eq:3}
v_2=(6-m)+(2m-6)=m.
\end{equation}
Counting the edges of $\Gamma$ in terms of the nodes we obtain
\begin{equation}\label{eq:4}
2e=2v_2+3v_3+6v_6.
\end{equation}

Since $\Gamma$ is bipartite, we have
\begin{equation}\label{eq:5}
f=\sum_{i=2}^\infty f_{2i}
\end{equation}
where $f_j$ is the number of faces with $j$ edges. The boundary of $P$ contains exactly $(2m-6)+v_\pi$ black nodes of $\Gamma$: $2m-6$ of them represent inner angles of size $\frac{2\pi}{3}$ and $v_\pi$ of them are $\pi$-vertices. Hence the unbounded face of $\Gamma$ has $2((2m-6)+v_\pi)$ edges, and in turn
\begin{equation}\label{eq:6}
f_{2(2m-6+v_\pi)} \ge 1.
\end{equation}
Counting the edges of $\Gamma$ in terms of the faces we obtain
\begin{equation}\label{eq:7}
2e=\sum_{i=2}^\infty 2if_{2i}.
\end{equation}

Euler's formula for $\Gamma$ gives
\begin{eqnarray*}
2 &=& f-e+v\\
&=& \left(f-\frac{1}{6}\,2e\right)+\left(v-\frac{1}{3}\,2e\right)\\
&\stackrel{\text{(\ref{eq:2},\ref{eq:4},\ref{eq:5},\ref{eq:7})}}{=}& \left(\sum_{i=2}^\infty f_{2i}-\frac{1}{6}\sum_{i=2}^\infty 2if_{2i}\right)+\left(v_2+v_3+v_6-\frac{1}{3}(2v_2+3v_3+6v_6)\right)\\
&\stackrel{\text{(\ref{eq:3})}}{=}& \frac{1}{3}\left(f_4-\sum_{i=3}^\infty (i-3)f_{2i}\right)+\frac{1}{3}\left(m-3v_6\right).
\end{eqnarray*}
Thus
$$
f_4=6+\sum_{i=3}^\infty (i-3)f_{2i}-m+3v_6.
$$
Using $2m-6+v_\pi \ge 3$ (as a consequence of \eqref{eq:1}) and $v_6 \ge 0$ we get
$$
f_4 \ge 6+ ((2m-6+v_\pi)-3)f_{2(2m-6+v_\pi)}-m
$$
and, by \eqref{eq:6},
$$
f_4 \ge 6+ ((2m-6+v_\pi)-3)-m=m+v_\pi-3.
$$
This estimate completes the proof, because every face of $\Gamma$ with four edges represents two triangles of $\mathcal{T}$ having a side in common.
\end{proof}


\subsection{Upper estimates\label{subsec:upgen}}

We shall use the following observation.

\begin{lemma}\label{lem:trapezoid}
If a tiling $\mathcal{T}$ of a trapezoid by equilateral triangles satisfies one of the conditions
\begin{itemize}
\item[($\alpha$)]
all $\pi$-vertices of $\mathcal{T}$ are contained in the $\left(\frac{\pi}{3},\frac{\pi}{3}\right)$-side of the trapezoid or
\item[($\beta$)]
$|\mathcal{T}|=3$,
\end{itemize}
then $\mathcal{T}$ is a similar image of the respective tiling from Figure~\ref{fig:small tilings} and $s(\mathcal{T})=1$.
\end{lemma}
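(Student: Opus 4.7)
The plan is to analyse the three tiles that are forced along the sides of the trapezoid free of $\pi$-vertices. Place the trapezoid so that its $\left(\frac{\pi}{3},\frac{\pi}{3}\right)$-side lies along the $x$-axis with length $L$, and the two slanted sides have common length $s<L$; the $\left(\frac{2\pi}{3},\frac{2\pi}{3}\right)$-side then has length $L-s$. Under ($\alpha$), neither slanted side nor the top contains a $\pi$-vertex, so each of them is the full side of a single tile. The tile at each $\frac{\pi}{3}$-corner is then forced to be equilateral with one side equal to the adjacent slanted side, so it has size $s$ and lies right-side-up at that corner. The tile bordering the top is equilateral with a side equal to the top (length $L-s$) and is upside-down, with apex $(L/2,(2s-L)\sqrt{3}/2)$. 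These three tiles have pairwise disjoint interiors and exhaust the trapezoid only when $L\ge 2s$ (so that the two corner tiles do not overlap on the base) and $L\le 2s$ (so that the apex of the top tile stays in the closed trapezoid); hence $L=2s$. All three tiles then have size $s$, they meet at $(s,0)=(L/2,0)$, and $\mathcal{T}$ is a similar image of the three-tile tiling of Figure~\ref{fig:small tilings}, which gives $s(\mathcal{T})=1$.

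For ($\beta$) the plan is to reduce to ($\alpha$). The key preliminary observation is that in every non-trivial tiling of such a trapezoid the base carries at least one $\pi$-vertex: if the base were free of $\pi$-vertices, it would be the full side of a tile at a $\frac{\pi}{3}$-corner, forcing that tile's size to equal $L$; but the same tile also has a side along a slanted side of length $s<L$, a contradiction. In particular $v_\pi\ge 1$. Lemma~\ref{lem:necessary} with $m=4$ gives at least $1+v_\pi$ pairs of distinct tiles having a full common side, so $|\mathcal{T}|=3$ implies $1+v_\pi\le\binom{3}{2}=3$, i.e.\ $v_\pi\le 2$. If $v_\pi=1$, the single $\pi$-vertex must be on the base and the tiling satisfies ($\alpha$). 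If $v_\pi=2$, a short case analysis on the position of the second $\pi$-vertex yields a contradiction: two $\pi$-vertices on the base would, by the argument of the first paragraph, force $L=2s$ and hence leave only one $\pi$-vertex; a $\pi$-vertex on a slanted side leaves a segment of that slanted side uncovered by the three boundary tiles forced by the remaining free sides, so a fourth tile would be needed; and a $\pi$-vertex on the top splits it into at least two sides of upside-down tiles, which together with the two forced size-$s$ corner tiles exceed the budget of three tiles.

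The main obstacle is the case analysis under ($\beta$): although each sub-case is short once one identifies which corner or top tile is forced by each $\pi$-vertex-free side, one has to enumerate the placements of one or two $\pi$-vertices on the four sides of the trapezoid and verify in each case that the forced tiles either cannot coexist with the other forced tiles or already exhaust the three-tile budget.
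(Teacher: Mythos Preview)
Your argument is correct. For ($\alpha$) it is exactly the paper's proof, just written in coordinates: the two tiles $T_1,T_2$ at the $\frac{\pi}{3}$-corners are forced to have size $s$, the tile $T_3$ along the top has size $L-s$, and the non-overlap of $T_1,T_2$ together with $T_3\subseteq P$ forces $L=2s$.

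The difference is in how you handle ($\beta$). The paper dispatches it in one line, simply asserting that $(\beta)\Rightarrow(\alpha)$. The implicit reasoning is an orientation argument: with $|\mathcal{T}|=3$, the two exposed tiles $T_1,T_2$ at the $\frac{\pi}{3}$-corners point upwards, while every segment of the $\left(\frac{2\pi}{3},\frac{2\pi}{3}\right)$-side is bordered from below by a downward tile, so the entire top is a side of the remaining tile $T_3$; similarly each slanted side is bordered only by upward tiles, and since $T_2$ cannot reach the left slanted side (it would need size $L>s$), the whole left slanted side is a side of $T_1$, and symmetrically for the right. Hence all $\pi$-vertices lie on the base.

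Your route---showing $v_\pi\ge 1$ directly, invoking Lemma~\ref{lem:necessary} to get $v_\pi\le 2$, and then eliminating the placements of a second $\pi$-vertex case by case---is sound and self-contained, but heavier than necessary. The payoff of the orientation argument is that it avoids Lemma~\ref{lem:necessary} entirely and makes the reduction $(\beta)\Rightarrow(\alpha)$ a two-line observation; the payoff of your approach is that each step is completely explicit, so nothing is left to the reader.
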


\begin{proof}
We can assume ($\alpha$), since ($\beta$) implies ($\alpha$).
Let $T_1,T_2 \in \mathcal{T}$ be the tiles covering the two $\left( \frac{\pi}{3},\frac{2\pi}{3} \right)$-sides of the trapezoid and let $T_3 \in \mathcal{T}$ be the triangle that covers the $\left( \frac{2\pi}{3},\frac{2\pi}{3} \right)$-side. Since the two $\left( \frac{\pi}{3},\frac{2\pi}{3} \right)$-sides have the same length, $\SIZE(T_1)=\SIZE(T_2)$. We can exclude the situations $\SIZE(T_3) < \SIZE(T_1)$, because then the interiors of $T_1$ and $T_2$ would overlap, and $\SIZE(T_3) > \SIZE(T_1)$, since then the third vertex of $T_3$ would be outside the trapezoid. Hence $\SIZE(T_1)=\SIZE(T_2)=\SIZE(T_3)$ and $T_3$ covers the remainder of the trapezoid between $T_1$ and $T_2$. The claim follows.
\end{proof}

For completing the proof of Theorem~\ref{thm:general}, it remains to show that
\begin{eqnarray}
\STRI(n) &\le& \left\{
\begin{array}{cl}
1, & n=1,4,\\
2, & n=6,\\
n-5, &n=7,8,\ldots,
\end{array}
\right.
\label{eq:uptri}\\
\STRA(n) &\le& \left\{
\begin{array}{cl}
1, & n=3,\\
2, & n=5,\\
n-4, &n=6,7,\ldots,
\end{array}
\right.
\label{eq:uptra}\\
\SPAR(n) &\le& \left\{
\begin{array}{cl}
1, & n=2,4,\\
2, & n=5,\\
n-4, &n=6,7,\ldots,
\end{array}
\right.
\label{eq:uppar}\\
\SPENT(n) &\le& \left\{
\begin{array}{cl}
2, & n=4,\\
n-3, &n=5,6,\ldots,
\end{array}
\right.
\label{eq:uppen}\\
\SHEX(n) &\le& \left\{
\begin{array}{cl}
n-5, & n=6,7,8,\\
n-4, &n=9,10,\ldots
\end{array}
\right.
\label{eq:uphex}
\end{eqnarray}
In all situations we consider a tiling $\mathcal{T}$ of a respective polygon $P$. The cardinality and the number of $\pi$-vertices of $\mathcal{T}$ are denoted by $n=|\mathcal{T}|$ and $v_\pi$, respectively. We have to show that $s(\mathcal{T})$ is bounded from above by the claimed upper estimate of $\STRI(n)$, $\STRA(n)$, $\SPAR(n)$, $\SPENT(n)$ or $\SHEX(n)$, respectively.

\begin{proof}[Proof of \eqref{eq:uptri}]
We proceed by induction over $n$. The claim is trivial if $n=1$. If $n=4$ then $\mathcal{T}$ contains three exposed triangles and the fourth tile covers the remainder of $P$. We obtain the respective tiling from Figure~\ref{fig:small tilings} by four congruent tiles. This yields $s(\mathcal{T})=1$. Now let $n \ge 6$.

\emph{Case 1: $v_\pi \le 3$. } Then $v_\pi=3$ and $\mathcal{T}$ splits into three exposed triangles of the same size and a tiling $\mathcal{T}'$ of the remaining equilateral triangle $P'$ of $P$, $P'$ being of that size as well. Now $|\mathcal{T}'|=n-3\ge 3$, and the induction hypothesis gives $s(\mathcal{T}') \le |\mathcal{T}'|-3=n-6$. Since the tiles in $P'$ are smaller than the exposed tiles of $\mathcal{T}$, we obtain $s(\mathcal{T}) = s(\mathcal{T}')+1 \le (n-6)+1=n-5$.

\emph{Case 2: $v_\pi=4$ and $n \ne 6$. } Now $\mathcal{T}$ contains three exposed triangles $T_1$, $T_2$ and $T_3$ such that $T_1$ shares a vertex with each of $T_2$ and $T_3$. Thus $\SIZE(T_2)=\SIZE(T_3)$, but $T_2$ and $T_3$ are disjoint. By Lemma~\ref{lem:necessary}, $\mathcal{T}$ contains at least $3+v_\pi-3=4$ pairs of triangles that share a side. We introduce a graph $\Gamma$ whose nodes are the triangles of $\mathcal{T}$. Two triangles form an edge if these are $T_2$ and $T_3$ or if they have a side in common. This graph has at least $1+4=5$ edges and does not contain a cycle of size less than six. (Cycles not involving the edge $\{T_2,T_3\}$ have size at least six. If a cycle contains $\{T_2,T_3\}$ then all triangles of $\mathcal{T} \setminus \{T_1\}$ are of the same size. By $v_\pi=4$, the side of $P$ that meets both $T_2$ and $T_3$ contains exactly two $\pi$-vertices. Hence there are exactly five triangles in $\mathcal{T} \setminus \{T_1\}$ and in turn $n=|\mathcal{T}|=6$, which is excluded.) We pick five edges of $\Gamma$. Since they do not contain a cycle and since each of them connects two congruent triangles, we have $s(\mathcal{T}) \le |\mathcal{T}|-5=n-5$.

\emph{Case 3: $v_\pi=4$ and $n=6$. } Lemma~\ref{lem:necessary} shows that $\mathcal{T}$ contains at least $3+v_\pi-3=4$ pairs of triangles having a side in common. This yields
$s(\mathcal{T}) \le |\mathcal{T}|-4=2$.  

\emph{Case 4: $v_\pi \ge 5$. } Now Lemma~\ref{lem:necessary} shows that $\mathcal{T}$ contains at least $3+v_\pi-3 \ge 5$ pairs of triangles having a side in common. This yields
$s(\mathcal{T}) \le |\mathcal{T}|-5=n-5$.
\end{proof}

\begin{proof}[Proof of \eqref{eq:uptra}]
We add an equilateral triangle at the $\left(\frac{2\pi}{3},\frac{2\pi}{3}\right)$-side of $P$ and obtain a triangle $P'$ and a corresponding tiling $\mathcal{T}'$ of $P'$ with $|\mathcal{T}'|=n+1$. Then
$$
s(\mathcal{T}) \le s(\mathcal{T}') \le \STRI(n+1) \stackrel{\text{\eqref{eq:uptri}}}{\le}
\left\{
\begin{array}{cl}
1, & n=3,\\
2, & n=5,\\
n-4, &n=6,7,\ldots
\end{array}
\right.
$$
\end{proof}

\begin{proof}[Proof of \eqref{eq:uppar}]
W.l.o.g. $n \ge 4$, since $n \in \DOM(\SPAR)=\{2\} \cup \{4,5,\ldots\}$ and the situation is clear for $n=2$.

\emph{Case 1: One side length of $P$ is larger than $\max\{\SIZE(T): T \in \mathcal{T}\}$. } We add two triangles at two sides of $P$ that meet at a $\frac{\pi}{3}$-vertex of $P$. This way we obtain a tiling $\mathcal{T}'$ of a triangle $P'$ with $|\mathcal{T}'|=n+2$ and $s(\mathcal{T}') \ge s(\mathcal{T})+1$, since one of the new triangles has a new size. Consequently,
$$
s(\mathcal{T}) \le s(\mathcal{T}')-1 \le \STRI(n+2)-1 \stackrel{\text{\eqref{eq:uptri}}}{\le}\left\{
\begin{array}{ll}
2-1=1, & n=4,\\
2-1 < 2, & n=5,\\
((n+2)-5)-1=n-4, &n=6,7,\ldots
\end{array}
\right.
$$

\emph{Case 2: No side length of $P$ is larger than $\max\{\SIZE(T): T \in \mathcal{T}\}$. } Since no side of $P$ can be shorter than $\max\{\SIZE(T): T \in \mathcal{T}\}$, $P$ is a rhombus and one tile $T_0 \in \mathcal{P}$ represents half of $P$. Then $\mathcal{T}'=\mathcal{T}\setminus\{T_0\}$ is a tiling of the other half $P'$ of $P$. In particular, $P'$ is a triangle, $s(\mathcal{T}')\ge s(\mathcal{T})-1$ and 
$$
n \in \{5\} \cup \{7,8,\ldots\},
$$
because $n \in \DOM(\SPAR) \setminus \{2\}=\{4,5,\ldots\} $ and $n-1= |\mathcal{T}'| \in \DOM(\STRI)=\{1,4\}\cup\{6,7,\ldots\}$. We obtain
$$
s(\mathcal{T}) \le s(\mathcal{T}')+1 \le \STRI(n-1)+1 \stackrel{\text{\eqref{eq:uptri}}}{\le}\left\{
\begin{array}{ll}
1+1=2, & n=5,\\
2+1=n-4, & n=7,\\
((n-1)-5)+1< n-4, &n=8,9,\ldots
\end{array}
\right.
$$
\end{proof}

\begin{proof}[Proof of \eqref{eq:uppen}]
\emph{Case 1: $v_\pi=0$. } After removing the exposed triangle from $\mathcal{T}$ as well as from $P$, we obtain a tiling $\mathcal{T}'$ of a trapezoid $P'$ such that all $\pi$-vertices of $\mathcal{T}'$ are on the $\left(\frac{\pi}{3},\frac{\pi}{3}\right)$-side of $P'$. By Lemma~\ref{lem:trapezoid}, $\mathcal{T}'$ is a similar image of the tiling of cardinality three from Figure~\ref{fig:small tilings}. Hence $\mathcal{T}$ consists of $n=4$ tiles, three of them being of the same size. So we have $s(\mathcal{T}) \le 2$, the required upper estimate for $n=4$.

\emph{Case 2: $v_\pi\ge 1$. } By Lemma~\ref{lem:necessary}, $\mathcal{T}$ contains at least $5+v_\pi-3 \ge 3$ pairs of triangles that have a side in common. This yields $s(\mathcal{T}) \le |\mathcal{T}|-3=n-3$.
\end{proof}

\begin{proof}[Proof of \eqref{eq:uphex}]
\emph{Case 1: $v_\pi=0$ or $n \le 6$. } We have $v_\pi=0$. Indeed, if $n \le 6$ then $n=6$ by \eqref{eq:dom6}. Since no tile from $\mathcal{T}$ covers segments of more than one side of the hexagon $P$, each side of $P$ is a side of some triangle from $\mathcal{T}$ and in turn $v_\pi=0$.

Let $T_1,\ldots,T_6 \in \mathcal{T}$ be the tiles that cover the sides of $P$ in successive order. It is enough to show that $\SIZE(T_1)=\ldots=\SIZE(T_6)$, since then $P$ is a regular hexagon of side length $\SIZE(T_1)$ that is completely tiled by $T_1,\ldots,T_6$, hence $n=6$ and $s(\mathcal{T})=1$, as claimed in \eqref{eq:uphex}.

To obtain a contradiction, suppose that, say, $\SIZE(T_6) > \SIZE(T_1)$. Then 
\begin{equation}\label{eq:hex1.1}
\SIZE(T_6) > \SIZE(T_1) \ge \SIZE(T_2) \ge \SIZE(T_3),
\end{equation}
because tiles do not overlap. The side lengths of $P$ satisfy
\begin{equation}\label{eq:hex1.2}
\SIZE(T_2)+\SIZE(T_3)=\SIZE(T_5)+\SIZE(T_6).
\end{equation}
Then \eqref{eq:hex1.1} shows that $\SIZE(T_6)>\SIZE(T_5)$, which implies
\begin{equation}\label{eq:hex1.3}
\SIZE(T_6) > \SIZE(T_5) \ge \SIZE(T_4) \ge \SIZE(T_3)
\end{equation}
as above. Adding the inequalities $\SIZE(T_2) < \SIZE(T_6)$ from \eqref{eq:hex1.1} and 
$\SIZE(T_3) \le \SIZE(T_5)$ from \eqref{eq:hex1.3} gives a contradiction to \eqref{eq:hex1.2}.

\emph{Case 2: $v_\pi=1$ and $n\in\{7,8\}$. } Let $v_0$ be the $\pi$-vertex, let $T_1,\ldots,T_7 \in \mathcal{T}$ be the tiles that form the boundary of $P$ ordered consecutively such that $v_0$ is a common vertex of $T_1$ and $T_7$, and let $T^\ast \in \mathcal{T}$ be the third tile having $v_0$ as a vertex, see Figure~\ref{fig:case2}(a). 
\begin{figure}
\begin{center}
\begin{tikzpicture}

\node at (0,-.22) {
\begin{tikzpicture}[xscale=.03,yscale=.02598]

\draw
  (0,30)--(15,0)--(25,20) (15,0)--(45,0)--(35,20) (55,20)--(45,0)--(75,0)--(65,20) (75,0)--(90,30)--(70,30) (90,30)--(75,60)--(65,40) (75,60)--(15,60)--(25,40) (15,60)--(0,30)--(20,30) 
  (45,0) node[below] {$v_0$}
  (60,10) node {$T_1$}
  (75,20) node {$T_2$}
  (75,40) node {$T_3$}
  (45,50) node {$T_4$}
  (15,40) node {$T_5$}
  (15,20) node {$T_6$}
  (30,10) node {$T_7$}
  (45,19) node {$T^\ast$}
  (0,55) node {(a)}
  ;

\end{tikzpicture}
};

\node at (4.33,0) {
\begin{tikzpicture}[xscale=.03,yscale=.02598]

\draw
  (15,60)--(0,30)--(15,0)--(45,0)--(75,0)--(90,30)--(75,60)--(15,60)--(45,0)--(75,60) (0,30)--(30,30)--(15,0) (75,0)--(60,30)--(90,30)
  (0,55) node {(b)} 
  ;

\end{tikzpicture}
};

\node at (8.4,0) {
\begin{tikzpicture}[xscale=.03,yscale=.02598]

\draw
  (0,24)--(12,0)--(60,0)--(72,24)--(54,60)--(18,60)--(0,24)--(72,24) (12,0)--(24,24)--(36,0)--(48,24)--(60,0) (54,60)--(36,24)--(18,60)
  (0,55) node {(c)} 
  ;

\end{tikzpicture}
};

\node at (-.3,-2.5) {
\begin{tikzpicture}[xscale=.03,yscale=.02598]

\draw
  (0,30)--(15,0)--(30,30)--(15,60)--(0,30)--(30,30) (30,20)--(40,0)--(60,0)--(70,20)--(50,60)--(30,20)--(70,20) (40,0)--(50,20)--(60,0)
  (0,55) node {(d)} 
  ;

\draw[dotted]
  (15,0)--(40,0) (15,60)--(50,60)
  ;

\end{tikzpicture}
};

\node at (3.37,-2.5) {
\begin{tikzpicture}[xscale=.03,yscale=.02598]

\draw
  (0,30)--(15,0)--(55,0)--(65,20)--(45,60)--(15,60)--(0,30)--(30,30)--(15,60) (15,0)--(45,60) (65,20)--(25,20)--(35,0)--(45,20)--(55,0)
  (0,55) node {(e)} 
  ;

\end{tikzpicture}
};

\node at (7.1,-2.5) {
\begin{tikzpicture}[xscale=.03,yscale=.02598]

\draw
  (0,30)--(15,0)--(65,0)--(75,20)--(55,60)--(15,60)--(0,30)--(30,30)--(15,0) (15,60)--(45,0)--(55,20)--(65,0) (75,20)--(35,20)--(55,60)
  (0,55) node {(f)} 
  ;

\end{tikzpicture}
};

\end{tikzpicture}
\end{center}
\caption{Proof of \eqref{eq:uphex}, Case 2}
\label{fig:case2}
\end{figure}
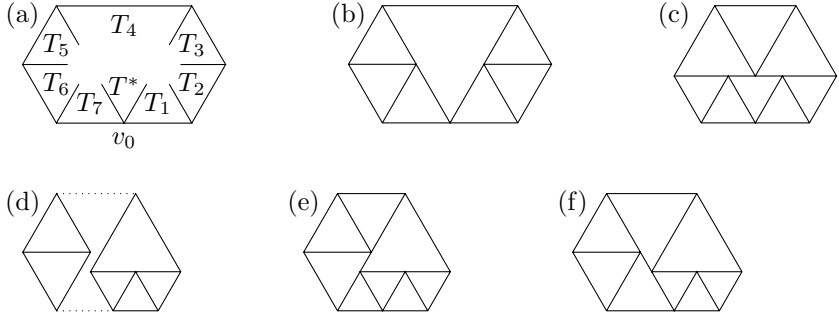
If $n=7$ then necessarily $T^\ast=T_4$, the hexagon $P$ splits into $T_4$ and two trapezoids tiled by $T_1,T_2,T_3$ and $T_5,T_6,T_7$, respectively, Lemma~\ref{lem:trapezoid} shows that $\mathcal{T}$ is similar to Figure~\ref{fig:case2}(b), and we obtain our claim $s(\mathcal{T})\le 2$.

Now we consider $n=8$. If $v_0$ was a vertex of $T_4$ then one of the two trapezoids besides $T_4$ would be tiled by four triangles, contrary to \eqref{eq:dom4t}. Thus $v_0$ is a vertex of $T_8 \in \mathcal{T}$. Consequently, the union of the lower sides of $T_3$ and $T_5$ is the union of the upper sides of $T_2$, $T_6$ and $T_8$.

If the lower sides of $T_3$ and $T_5$ together form a segment, then $\SIZE(T_3)=\SIZE(T_5)$ and $\SIZE(T_2)=\SIZE(T_6)=\SIZE(T_8)$, the tiling $\mathcal{T}$ is similar to Figure~\ref{fig:case2}(c), and we obtain $s(\mathcal{T})=2<3$.

If the lower sides of $T_3$ and $T_5$ do not form a segment, then w.l.o.g. the lower side of $T_3$ is the union of the upper sides of $T_2$ and $T_8$ and the lower side of $T_5$ coincides with the upper one of $T_6$, see Figure~\ref{fig:case2}(d). Then $\SIZE(T_1)=\SIZE(T_2)=\SIZE(T_8)$, $\SIZE(T_3)=2\SIZE(T_1)$ and $\SIZE(T_5)=\SIZE(T_6)=\frac{3}{2}\SIZE(T_1)$. Now it follows easily that either $\SIZE(T_4)=\frac{3}{2}\SIZE(T_1)$ (and in turn $\SIZE(T_7)=\SIZE(T_1)$, see Figure~\ref{fig:case2}(e)) or $\SIZE(T_4)=2\SIZE(T_1)$ (and in turn $\SIZE(T_7)=\frac{3}{2}\SIZE(T_1)$, see Figure~\ref{fig:case2}(f)). In both the situations we obtain our claim $s(\mathcal{T}) \le 3$.

\emph{Case 3: $v_\pi=1$ and $n \ge 9$. } Lemma~\ref{lem:necessary} gives at least $6+v_\pi-3=4$ pairs of triangles in $\mathcal{T}$ that have a common side. This yields $s(\mathcal{T}) \le |\mathcal{T}|-4=n-4$.

\emph{Case 4: $v_\pi \ge 2$ and $n \ge 7$. } Lemma~\ref{lem:necessary} shows that there are at least $6+v_\pi-3 \ge 5$ pairs of tiles in $\mathcal{T}$ having a side in common. This gives 
$$
s(\mathcal{T}) \le |\mathcal{T}|-5=n-5\le\left\{
\begin{array}{ll}
n-5, & n=7,8,\\
n-4, & n=9,10,\ldots
\end{array}
\right.
$$
\end{proof}


\section{Proof of Theorem~\ref{thm:perfect}}


\subsection{Negative results on domains}

For 
\begin{equation}\label{eq:nDTRIp}
2,3,\ldots,14 \notin \DOM\left(\STRIp\right)
\end{equation}
we refer to \cite[Section 3.2]{drapal2010}.
It remains to show that
\begin{eqnarray}
1,2,\ldots,12 & \notin & \DOM\left(\STRAp\right), \label{eq:nDTRAp}\\
1;\; 3,4,\ldots,12 & \notin & \DOM\left(\SPARp\right), \label{eq:nDPARp}\\
1,2,\ldots,11 & \notin & \DOM\left(\SPENTp\right), \label{eq:nDPENTp}\\
1,2,\ldots,10 & \notin & \DOM\left(\SHEXp\right). \label{eq:nDHEXp}
\end{eqnarray}
These claims are prepared by a lemma.

\begin{lemma}\label{lem:perf_dom_neg}
\renewcommand{\labelenumi}{\bf (\alph{enumi})}
\begin{enumerate}
\item
Let $\ast$ stand for $\rm tri$, $\rm trap$, $\rm par$, $\rm pent$ or $\rm hex$. If $n \in \DOM\left(\SASTp\right)$ then $2\SAST(n) \ge  n$.

\item
If $n \in \DOM\left(\STRAp\right)$ then $n-2 \in \DOM\left(\STRAp\right) \cup \DOM\left(\SPENTp\right) \cup \DOM\left(\SHEXp\right)$.

\item
If $n \in \DOM\left(\SPARp\right)$ then\\ $n-2 \in \{0\} \cup \DOM\left(\STRAp\right) \cup \DOM\left(\SPARp\right) \cup \DOM\left(\SPENTp\right) \cup \DOM\left(\SHEXp\right)$.

\item
If $n \in \DOM\left(\SPENTp\right)$ then $n-1 \in \DOM\left(\STRAp\right) \cup \DOM\left(\SPENTp\right) \cup \DOM\left(\SHEXp\right)$.
\end{enumerate}
\end{lemma}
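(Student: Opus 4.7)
For (a), my first step is to observe that in any tiling $\mathcal{T}$ of a convex polygon $P$ whose interior angles lie in $\{\pi/3, 2\pi/3\}$ by equilateral triangles, every tile edge is parallel to one of three fixed directions (since the angles meeting at any vertex of $\mathcal{T}$ are integer multiples of $\pi/3$). Consequently each tile is oriented either ``upward'' or ``downward'', and two tiles of equal size and matching orientation must be translates of each other. T-perfectness therefore limits each size class to at most two tiles, giving $n = |\mathcal{T}| \le 2 s(\mathcal{T}) \le 2 \SAST(n)$.

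For (b)--(d) the common strategy is to remove the tile(s) at the $\pi/3$-corner(s) of $P$ and identify the resulting convex region. The reduced tiling of that region is automatically t-perfect (t-perfectness is hereditary to subfamilies), so it suffices to determine the polygonal type of the leftover region by counting its corners. For (d), let $V_0$ be the unique $\pi/3$-corner of the pentagon, $T_0$ the tile with vertex there, and $\ell_1, \ell_2$ the lengths of the two sides of $P$ meeting at $V_0$; with $a_0 := \SIZE(T_0) \le \min(\ell_1,\ell_2)$, the three subcases $a_0 < \min(\ell_1,\ell_2)$, $a_0 = \ell_i$ for exactly one $i$, and $a_0 = \ell_1 = \ell_2$ produce respectively a hexagon, a pentagon, and a trapezoid (a $\pi/3$-angle arising at every endpoint of the new side where $a_0$ equals a side length), each carrying $n-1$ tiles.

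For (b), I would perform the same boundary tracking simultaneously at both $\pi/3$-corners of the trapezoid. Both corner-tiles $T_L, T_R$ are upward-pointing, so t-perfectness forces $\SIZE(T_L) \ne \SIZE(T_R)$. Depending on whether each of $T_L, T_R$ reaches the opposite end of its slanted side, and on whether the two third-sides meet along the $(\pi/3,\pi/3)$-side, the reduced region turns out to be a hexagon, a pentagon, or a trapezoid---never a parallelogram, since any four-corner outcome here always has its two $\pi/3$-corners adjacent. For (c), the two $\pi/3$-corners are opposite and the corresponding tiles have opposite orientations (upward and downward), so their sizes may now coincide; a slightly longer but analogous case analysis produces a hexagon, a pentagon, a parallelogram, a trapezoid, or---precisely when $n = 2$ and $P$ is a rhombus tiled by its two halves---the empty set.

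The hard part will be the case analysis for (b) and (c): one must single out exactly which coincidences among $\SIZE(T_L)$, $\SIZE(T_R)$, and the side lengths of $P$ produce which reduced polygon type, and verify in particular that (c) really does yield a parallelogram in the intermediate case where $\SIZE(T_L) = \SIZE(T_R)$ coincides with one but not both side lengths of $P$.
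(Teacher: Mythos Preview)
Your plan is correct and follows essentially the same approach as the paper: part~(a) via the pigeonhole observation that t-perfectness allows at most two tiles per size class, and parts~(b)--(d) by removing the exposed tiles at the $\frac{\pi}{3}$-corners and classifying the convex remainder. The paper's write-up is considerably terser---it simply asserts in~(b) that $P'$ is ``neither a parallelogram nor a triangle'' and justifies only the triangle exclusion via t-perfectness---whereas you sketch the full vertex-count case analysis and supply the geometric reason (adjacency of the surviving $\frac{\pi}{3}$-corners) that rules out a parallelogram; this extra care is welcome but not a different method.
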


\begin{proof} 
(a) If $n \in \DOM\left(\SASTp\right)$ then there exists a corresponding  t-perfect tiling $\mathcal{T}$ of cardinality $n$. By t-perfectness, there are no three triangles of the same size in $\mathcal{T}$. Consequently, $n \le 2\SASTp(n)$. Combining this with the trivial estimate $\SASTp(n) \le \SAST(n)$ we obtain the claim $n \le 2\SAST(n)$.

(b) Let $n \in \DOM\left(\STRAp\right)$ be represented by a t-perfect tiling of some trapezoid $P$. Cutting off the two exposed triangles of the tiling from $P$ results in a smaller polygon $P'$ with a t-perfect tiling of cardinality $n-2$. The polygon $P'$ is neither a parallelogram nor a triangle, since in the latter case the two exposed triangles would have been congruent under a translation. So $P'$ is a trapezoid, a pentagon or a hexagon. Thus $n-2 \in \DOM\left(\STRAp\right) \cup \DOM\left(\SPENTp\right) \cup \DOM\left(\SHEXp\right)$.

(c) Now let $n \in \DOM\left(\SPARp\right)$ be represented by a t-perfect tiling of some parallelogram $P$. Cutting off the two exposed triangles from $P$, we get a polygon $P'$ that may be empty (if $n=2$) or a trapezoid, a parallelogram, a pentagon or a hexagon and has a t-perfect tiling of cardinality $n-2$. This yields (c).

(d) Here we start with a t-perfect tiling of cardinality $n$ of some pentagon $P$. We cut off the one exposed triangle and get a t-perfect tiling of cardinality $n-1$ of some trapezoid, pentagon or hexagon $P'$. 
\end{proof}

\begin{corollary}\label{cor:perf_neg}
\begin{eqnarray*}
1,2,\ldots,10 & \notin & \DOM\left(\STRAp\right),\\
1;\;3,4,\ldots,10 & \notin & \DOM\left(\SPARp\right),\\
1,2,\ldots,9 & \notin & \DOM\left(\SPENTp\right),\\
1,2,\ldots,8 & \notin & \DOM\left(\SHEXp\right).
\end{eqnarray*}
\end{corollary}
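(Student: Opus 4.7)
The plan is to combine three ingredients: the trivial containment $\DOM\left(\SASTp\right)\subseteq\DOM(\SAST)$, which supplies base exclusions from Theorem~\ref{thm:general}; the inequality $2\SAST(n)\ge n$ of Lemma~\ref{lem:perf_dom_neg}(a), which excludes a further batch of $n$ once the explicit values of $\SAST(n)$ are substituted; and the reductions in parts (b)--(d) of Lemma~\ref{lem:perf_dom_neg}, used to propagate exclusions upward.

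Reading the four domains from Theorem~\ref{thm:general} directly rules out $1,2,4$ from $\DOM\left(\STRAp\right)$, $1,3$ from $\DOM\left(\SPARp\right)$, $1,2,3$ from $\DOM\left(\SPENTp\right)$, and $1,\ldots,5$ from $\DOM\left(\SHEXp\right)$. Substituting Theorem~\ref{thm:general} into Lemma~\ref{lem:perf_dom_neg}(a) and checking when $2\SAST(n)<n$ adds the exclusions $n\in\{3,5,6,7\}$ for $\STRAp$, $n\in\{4,5,6,7\}$ for $\SPARp$, $n=5$ for $\SPENTp$ and $n\in\{6,7,8\}$ for $\SHEXp$. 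The hexagon row of the corollary is already complete at this stage.

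The remaining exclusions come by iterated use of (b), (c), (d). Two preparatory applications of (d) give $4,6\notin\DOM\left(\SPENTp\right)$, because at $n-1\in\{3,5\}$ all three alternatives $\DOM\left(\STRAp\right),\DOM\left(\SPENTp\right),\DOM\left(\SHEXp\right)$ are already known to avoid $n-1$. After this, for each $m\in\{6,7,8\}$ the three conditions $m\notin\DOM\left(\STRAp\right)$, $m\notin\DOM\left(\SPENTp\right)$, $m\notin\DOM\left(\SHEXp\right)$ all hold, whence (b) yields $m+2\notin\DOM\left(\STRAp\right)$ and (d) yields $m+1\notin\DOM\left(\SPENTp\right)$; running $m=6,7,8$ sweeps out $8,9,10$ from $\DOM\left(\STRAp\right)$ and $7,8,9$ from $\DOM\left(\SPENTp\right)$. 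For $\SPARp$ I then apply (c) to $n=8,9,10$: each time the right-hand side is a quintuple already known to be false (the alternative "$n-2=0$" is inactive since $n-2\ge 6$), giving $8,9,10\notin\DOM\left(\SPARp\right)$.

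I expect no genuine difficulty. The only subtlety is the order of deductions, since each invocation of (b), (c) or (d) requires every entry on its right-hand side to be already excluded. Processing $n$ in increasing order, and within each $n$ handling $\SHEXp$ first, then $\STRAp$, then $\SPENTp$, then $\SPARp$, suffices; the whole argument is bounded bookkeeping built on Lemma~\ref{lem:perf_dom_neg}.
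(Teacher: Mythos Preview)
Your proof is correct and follows essentially the same approach as the paper: combine the domain information and values from Theorem~\ref{thm:general} with Lemma~\ref{lem:perf_dom_neg}(a) for the base exclusions, then propagate upward via parts (b)--(d). The only difference is in the bookkeeping order of the iterated applications of (b)--(d), which is immaterial.
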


\begin{proof}
The trivial inclusion $\DOM\left(\SASTp\right)\subseteq\DOM(\SAST)$, Theorem~\ref{thm:general} and Lemma~\ref{lem:perf_dom_neg}(a) give
\begin{eqnarray*}
1,2,3,4,5,6,7 & \notin & \DOM\left(\STRAp\right),\\
1;\; 3,4,5,6,7 & \notin & \DOM\left(\SPARp\right),\\
1,2,3;\; 5 & \notin & \DOM\left(\SPENTp\right),\\
1,2,3,4,5,6,7,8 & \notin & \DOM\left(\SHEXp\right).
\end{eqnarray*}
Now fourfold application of Lemma~\ref{lem:perf_dom_neg}(d) yields
\[
4;\; 6,7,8 \notin \DOM\left(\SPENTp\right).
\]
Then Lemma~\ref{lem:perf_dom_neg}(b) and (c) show that
\[
8,9,10 \notin \DOM\left(\STRAp\right) \quad\mbox{ and }\quad
8,9,10 \notin \DOM\left(\SPARp\right).
\]
Finally, by Lemma~\ref{lem:perf_dom_neg}(d),
\[
9 \notin \DOM\left(\SPENTp\right).
\]
\end{proof}

\begin{lemma}\label{lem:perf_hex_dom_neg}
\renewcommand{\labelenumi}{\bf (\alph{enumi})}
$9,10 \notin \DOM\left(\SHEXp\right)$.
\end{lemma}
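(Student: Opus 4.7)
The plan is to suppose for contradiction that $\mathcal{T}$ is a t-perfect tiling of a convex hexagon $P$ with $n = |\mathcal{T}| \in \{9, 10\}$; write $s = s(\mathcal{T})$ and let $v_\pi$ denote the number of $\pi$-vertices of $\mathcal{T}$. The strategy is first to corner the tiling into a very narrow range of configurations and then to dispatch each one geometrically.

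For the combinatorial cornering I would proceed as follows. By Case~1 of the proof of \eqref{eq:uphex} we already have $v_\pi \ge 1$, since $v_\pi = 0$ would force $n = 6$. Now any two tiles sharing a full common side must have equal size, and t-perfectness permits at most one $\Delta$ and one $\nabla$ of each size, so the number of tile pairs sharing a full side is at most $n - s$. Lemma~\ref{lem:necessary} supplies at least $m + v_\pi - 3 = 3 + v_\pi$ such pairs, giving $v_\pi \le n - s - 3$. Combined with the t-perfectness bound $s \ge \lceil n/2 \rceil$ and the estimate $s \le \SHEX(n) = n - 4$ from Theorem~\ref{thm:general}(e), this forces $s = 5$ and $v_\pi = 1$ when $n = 9$, and $(s, v_\pi) \in \{(5, 1),(5, 2),(6, 1)\}$ when $n = 10$. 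Moreover, in each subcase the bound is sharp or nearly sharp, so the tiling is built chiefly from rhombi (i.e., $\Delta$--$\nabla$ pairs of equal size sharing a full common edge) together with a small number of lone tiles.

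Next I would analyze the local picture at any $\pi$-vertex $v_0 \in \partial P$: exactly three tiles $T_1, T_7, T^\ast$ meet at $v_0$, with $T_1, T_7$ covering the two boundary segments adjacent to $v_0$ and $T^\ast$ protruding into the interior. An elementary inspection of orientations shows that $T_1$ and $T_7$ have the same orientation (say $\Delta$) while $T^\ast$ has the opposite orientation ($\nabla$). Hence t-perfectness forbids $\SIZE(T_1) = \SIZE(T_7)$ (otherwise these two would be translates), and the only size coincidences possible among the three tiles at $v_0$ are $\SIZE(T^\ast) = \SIZE(T_1)$ or $\SIZE(T^\ast) = \SIZE(T_7)$, each yielding a rhombus at $v_0$.

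Finally I would run the case analysis: for every admissible triple $(n, s, v_\pi)$ I would enumerate how the requisite rhombi and lone tiles can be assembled into a convex hexagon with the prescribed $v_\pi$, respecting at each $\pi$-vertex the forced local pattern just described. The case $n = 9$ is the most rigid (four rhombi and one lone tile around a single $\pi$-vertex), while $n = 10,\,s = 5,\,v_\pi = 2$ demands exactly five rhombi fitted around two $\pi$-vertices. Using the hexagon closure equations (opposite sides are parallel and satisfy $a_2 + a_3 = a_5 + a_6$ together with its two cyclic analogues) and the non-overlap of tiles, I would show that every candidate arrangement collapses into a collision of tiles, a forbidden pair of translates, or a violation of convexity. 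This final geometric-combinatorial enumeration is the main obstacle: with three possible rhombus orientations, several possible placements for each lone tile, and the need to track two $\pi$-vertex configurations simultaneously in the $v_\pi = 2$ subcase, the bookkeeping is delicate and must dispose of every candidate across all admissible triples.
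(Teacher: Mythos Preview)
Your combinatorial cornering is sound and lands you in exactly the cases the paper treats: $v_\pi\ge 3$ is ruled out, $v_\pi=0$ gives $n=6$, and what remains is $v_\pi=1$ (both $n=9,10$) and $(n,v_\pi)=(10,2)$. The gap is that you stop at ``run the case analysis'' and never supply the ideas that make that analysis finite and short. Two specific tools are missing.

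For $v_\pi=2$ ($n=10$, five rhombi), the paper does not enumerate placements. It introduces the additive functional $\varphi_w(Q)$ (signed length of sides with outer normal $\pm w$), observes $\varphi_w$ vanishes on every rhombus and hence on $P$, so opposite sides of $P$ have equal length; then the pair of opposite sides carrying no $\pi$-vertex are full sides of two tiles of the same size, whose union is one of the five rhombi, which visibly cannot fit with the other four. Your ``hexagon closure equations'' gesture at this, but the functional is what makes it a two-line kill rather than a branching enumeration.

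For $v_\pi=1$, your plan of ``enumerating how rhombi and lone tiles can be assembled'' is not the argument that works. The paper instead labels the eight boundary tiles $T_1,\dots,T_8$ and tracks their interior (\,$2\pi$-)vertices $v_1',\dots,v_7',v_8^l,v_8^r$. The two decisive steps are: (i)~if two of these interior vertices with non-adjacent indices coincided, a polygonal arc through them would cut $P$ into a convex piece carrying a t-perfect tiling of cardinality between $3$ and $7$, contradicting Corollary~\ref{cor:perf_neg}; (ii)~summing all interior angles shows the tiling has at most five $2\pi$-vertices, so at least four of the eight cyclic coincidences $v_1'=v_2',\dots,v_8^r=v_1'$ must hold, and no two consecutive ones can (three equal tiles). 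This forces exactly the pattern $v_1'=v_2',\ v_3'=v_4',\ v_5'=v_6',\ v_7'=v_8^l$, after which a short metric check leaves a non-convex pentagonal remainder that one or two triangles cannot tile t-perfectly. You invoke neither Corollary~\ref{cor:perf_neg} nor the angle count; without them the ``delicate bookkeeping'' you anticipate is not just delicate but open-ended.
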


\begin{proof}
To obtain a contradiction, suppose that there is a t-perfect tiling $\mathcal{T}=\{T_1,\ldots,T_n\}$ of some convex hexagon $P$ with $n \in \{9,10\}$. Let $v_\pi$ be the number of $\pi$-vertices of $\mathcal{T}$.

\emph{Case 1: $v_\pi=0$. } As in Case 1 of the proof of \eqref{eq:uphex} in Subsection~\ref{subsec:upgen}, we see that $n=6$, a contradiction.

\emph{Case 2: $v_\pi \ge 3$. } Lemma~\ref{lem:necessary} says that $\mathcal{T}$ contains at least $6-v_\pi+3 \ge 6$ pairs of triangles that share a side. Since $n < 12$, there must be three triangles of the same size, which contradicts the t-perfectness of $\mathcal{T}$.

\emph{Case 3: $v_\pi=2$. } Now Lemma~\ref{lem:necessary} gives $6+v_\pi-3=5$ pairs of triangles with a common side. Since $\mathcal{T}$ is t-perfect, this shows that $n=10$ and the five pairs constitute five rhombi $R_1,\ldots,R_5$ of mutually different side lengths that tile $P$. 

For a vector $w \in \mathbb{R}^2 \setminus \{0\}$, we define the functional $\varphi_w(Q)$ of a polygon $Q$ by
\[
\varphi_w(Q)=\sum_{\genfrac{}{}{0pt}{2}{\mbox{\scriptsize $s$ is a side of $Q$ with}}{\mbox{\scriptsize outer normal vector $w$}}} \LENGTH(s)-
\sum_{\genfrac{}{}{0pt}{2}{\mbox{\scriptsize $s$ is a side of $Q$ with}}{\mbox{\scriptsize outer normal vector $-w$}}} \LENGTH(s).
\]
Clearly $\varphi_w(R_i)=0$ and, since $\varphi_w$ is additive under tiling, $\varphi_w(P)=\varphi_w(R_1)+\ldots+\varphi_w(R_5)=0$ for every $w$. That is, opposite sides of $P$ have the same length. By the assumption $v_\pi=2$, there are two opposite sides $s_1$, $s_2$ of $P$ that do not contain $\pi$-vertices of $\mathcal{T}$. So $s_1$ and $s_2$ are sides of two triangles $T_1,T_2 \in \mathcal{T}$ whose union is one of the rhombi, say $R_1$, because $\LENGTH(s_1)=\LENGTH(s_2)$. But then $R_1,\ldots,R_5$ cannot tile $P$, a contradiction.

\emph{Case 4: $v_\pi=1$. } Now we are in the situation of the left-hand side of Figure~\ref{fig:case4}, where $T_9$ (and possibly $T_{10}$) are not displayed.
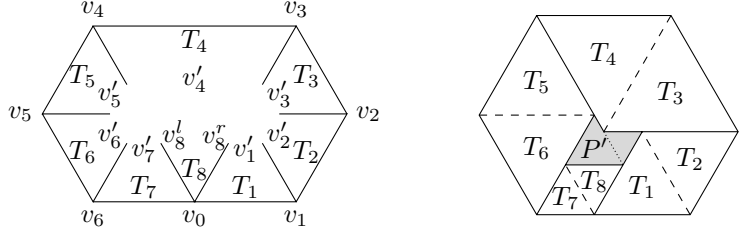
\begin{figure}
\begin{center}
\begin{tikzpicture}

\node at (0,-.22) {
\begin{tikzpicture}[xscale=.045,yscale=.03897]

\draw
  (0,30)--(15,0)--(25,20) (15,0)--(45,0)--(35,20) (55,20)--(45,0)--(75,0)--(65,20) (75,0)--(90,30)--(70,30) (90,30)--(75,60)--(65,40) (75,60)--(15,60)--(25,40) (15,60)--(0,30)--(20,30) 
  (45,0) node[below] {$v_0$}
  (75,0) node[below] {$v_1$}
  (90,30) node[right] {$v_2$}
  (75,60) node[above] {$v_3$}
  (15,60) node[above] {$v_4$}
  (0,30) node[left] {$v_5$}
  (15,0) node[below] {$v_6$}
  (60,18) node {$v_1'$}
  (70,23) node {$v_2'$}
  (70,37) node {$v_3'$}
  (45,42) node {$v_4'$}
  (20,37) node {$v_5'$}
  (20,23) node {$v_6'$}
  (30,18) node {$v_7'$}
  (39,23) node {$v_8^l$}
  (51,22) node {$v_8^r$}
  (60,5) node {$T_1$}
  (78,17) node {$T_2$}
  (78,43) node {$T_3$}
  (45,55) node {$T_4$}
  (12,43) node {$T_5$}
  (12,17) node {$T_6$}
  (30,5) node {$T_7$}
  (45,11) node {$T_8$}
  ;

\end{tikzpicture}
};

\node at (5.5,-.25) {
\begin{tikzpicture}[xscale=.255,yscale=.22083]

\fill[black!15]
  (4.5,3)--(7.5,3)--(8.5,5)--(6.5,5)--(6,6)--cycle
  ;

\draw
  (3,12)--(0,6)--(3,0)--(11,0)--(13.5,5)--(10,12)--(3,12)--(6.5,5)--(13.5,5) (3,0)--(6,6) (6,0)--(8.5,5) (4.5,3)--(7.5,3)   
  (8.5,1.66) node {$T_1$} 
  (11,3.33) node {$T_2$} 
  (10,7.33) node {$T_3$} 
  (6.5,9.66) node {$T_4$} 
  (3,8) node {$T_5$} 
  (3,4) node {$T_6$} 
  (4.5,1) node {$T_7$} 
  (6,2) node {$T_8$} 
  (6,4) node {$P'$} 
  ;

\draw[dashed]
  (11,0)--(8.5,5) (10,12)--(6.5,5) (0,6)--(6,6) (6,0)--(4.5,3)
  ;

\draw[densely dotted]
  (6.5,5)--(7.5,3)
  ;

\end{tikzpicture}
};

\end{tikzpicture}
\end{center}
\caption{Proof of Lemma~\ref{lem:perf_hex_dom_neg}, Case 4}
\label{fig:case4}
\end{figure}
Clearly $T_4 \ne T_8$, because otherwise the trapezoids $v_0v_1v_2v_3$ and $v_0v_4v_5v_6$ would be tiled by a total number of eight or nine triangles, a contradiction to Corollary~\ref{cor:perf_neg}. The $2\pi$-vertices of the triangles $T_1,\ldots,T_8$ are denoted by $v_1',\ldots,v_7',v_8^l,v_8^r$. We will discuss coincidences between these vertices. 

First note that two of these nine points cannot agree if their lower indices differ by more than one modulo $8$: Indeed, if a $2\pi$-vertex of $T_i$ coincides with a $2\pi$-vertex of $T_j$ such that $T_i$ and $T_j$ are no neighbours in the cyclic order of $T_1,\ldots,T_8$, then there is a polygonal arc $a$ along sides of $T_i$ and $T_j$ that connects a vertex of $T_i$ with a vertex of $T_j$ and passes through their common $2\pi$-vertex such that $a$ dissects $P$ into two polygons $P'$ and $P''$ both being tiled by at least three (and in turn at most seven) triangles of $\mathcal{T}$ and such that one of $P'$ and $P''$ is convex. (E.g., if $v_1'=v_3'$ we can pick $a=v_0v_1'v_3$, if $v_2'=v_8^r$ we can pick $a=v_2v_2'v_8^lv_0$.) Thus some convex polygon admits a t-perfect tiling of a cardinality between $3$ and $7$. This is impossible by \eqref{eq:nDTRIp} and Corollary~\ref{cor:perf_neg}. Consequently, \emph{the only possible coincidences among $v_1',\ldots,v_7',v_8^l,v_8^r$ are
\[
v_1'=v_2',\;v_2'=v_3',\;v_3'=v_4',\;v_4'=v_5',\;v_5'=v_6',\;v_6'=v_7',\;v_7'=v_8^l,\;v_8^r=v_1'.
\]
When counted cyclically, no two consecutive of these identities are satisfied,} because they would give rise to three tiles of the same size, which contradicts t-perfectness.

The total size of all inner angles of $T_1,\ldots,T_9(,T_{10})$ is $9\pi$ or $10\pi$. The size of those who are placed on the boundary of $P$ is $5\pi$. Thus the sizes of inner angles in $2\pi$-vertices of $\mathcal{T}$ sum up to $4\pi$ or $5\pi$. Since every $2\pi$-vertex requires inner angles of total size $\pi$ or $2\pi$, $\mathcal{T}$ has at most five $2\pi$-vertices. All of $v_1',\ldots,v_7',v_8^l,v_8^r$ are $2\pi$-vertices. So there are at least four identities between them. Taking the above observation into account, we obtain
\begin{equation}
v_1'=v_2',\quad v_3'=v_4',\quad v_5'=v_6',\quad v_7'=v_8^l
\end{equation}
(the alternative situation $v_2'=v_3',v_4'=v_5',v_6'=v_7',v_8^r=v_1'$ would be equivalent).

So $T_1 \cup T_2$, $T_3 \cup T_4$, $T_5 \cup T_6$ and $T_7 \cup T_8$ are rhombi and the sizes of $T_1,T_3,T_5,T_7$ are mutually different. Since $\SIZE(T_2)+\SIZE(T_3)=\SIZE(T_5)+\SIZE(T_6)=2\SIZE(T_5)$, we have either $\SIZE(T_2) < \SIZE(T_5) < \SIZE(T_3)$ or $\SIZE(T_2) > \SIZE(T_5) > \SIZE(T_3)$. As the latter situation would cause an overlap of $T_2$ and $T_5$, the former inequality applies and we are in the situation of the right-hand side of Figure~\ref{fig:case4}. The remainder $P'$ of $P$ after cutting off $T_1,\ldots,T_8$ must be a pentagon with inner angles of sizes $\frac{2\pi}{3}$, $\frac{\pi}{3}$, $\frac{4\pi}{3}$, $\frac{\pi}{3}$ and $\frac{\pi}{3}$ in consecutive order.
This excludes $n=9$, because $P'$ cannot be tiled by one single triangle $T_9$. If $n=10$, the only possibility of tiling $P'$ by $T_9$ and $T_{10}$ is dotted in the figure. But then the tiling of $P$ is not t-perfect. This contradiction completes the proof.
\end{proof}

\begin{proof}[Proof of \eqref{eq:nDTRAp}, \eqref{eq:nDPARp}, \eqref{eq:nDPENTp} and \eqref{eq:nDHEXp}.]
By Corollary~\ref{cor:perf_neg} and Lemma~\ref{lem:perf_hex_dom_neg}, it remains to prove that
\begin{eqnarray*}
11,12 \notin \DOM\left(\STRAp\right), \quad
11,12 \notin \DOM\left(\SPARp\right), \quad
10,11 \notin \DOM\left(\SPENTp\right).
\end{eqnarray*}
The third claim follows from Corollary~\ref{cor:perf_neg} and Lemma~\ref{lem:perf_hex_dom_neg} by twofold application of Lemma~\ref{lem:perf_dom_neg}(d). Then the remainder is a consequence of Lemma~\ref{lem:perf_dom_neg}(b) and (c).
\end{proof}


\subsection{New spiral pentagons and corresponding t-perfect tilings}

We define a sequence of integers $q(n)$, $n=8,9,\ldots$, by
\begin{equation}\label{eq:t-Pad}
q(8)=8,\; q(9)=11,\; q(10)=9 \quad\mbox{and}\quad q(n)=q(n-3)+q(n-2) \mbox{ for } n \ge 11.
\end{equation}

\begin{lemma}\label{lem:t-spiral}
\renewcommand{\labelenumi}{\bf (\alph{enumi})}
\begin{enumerate}
\item
The above numbers satisfy 
\begin{itemize}
\item
$q(n)>q(n-1)$ and $q(n)>12$ for $n \ge 11$,
\item
$\frac{1}{3}(q(n-1)-q(n-2)) \notin \{2,3,5,7,8,9,11,12\} \cup \{q(m): m \ge 8\}$ for $n=12,13,14,15;\; 17,18,\ldots$
\end{itemize}

\item
For every $n \in \{12,13,\ldots\}$, there is a convex pentagon $Q_n$ with sides of lengths $q(n-4)$, $q(n-3)$, $q(n-2)$, $q(n-1)$ and $q(n)$ (in consecutive order) with an inner angle of size $\frac{\pi}{3}$ between the sides of lengths $q(n-1)$ and $q(n)$. That pentagon admits a t-perfect tiling $\mathcal{T}_n$ by $n$ equilateral triangles $T_i$, $i=1,\ldots,n$, such that
\begin{itemize}
\item
$\SIZE(T_1)=\SIZE(T_2)=2$, $\SIZE(T_3)=3$, $\SIZE(T_4)=\SIZE(T_5)=5$, $\SIZE(T_6)=\SIZE(T_7)=7$, $\SIZE(T_8)=\SIZE(T_9)=8$, $\SIZE(T_{10})=9$, $\SIZE(T_{11})=11$, $\SIZE(T_{12})=12$ and $\SIZE(T_i)=q(i-1)$ for $i=13,\ldots,n$,
\item
if one side length of $Q_n$ is a side length of some triangle $T_i \in \mathcal{T}_n$ and different from $8$ (the latter is always the case if $n \ge 13$), then that side of $Q_n$ is a side of $T_i$ and there is only one triangle of that size in $\mathcal{T}_n$.
\end{itemize}
\end{enumerate}
\end{lemma}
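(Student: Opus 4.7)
The plan is to handle (a) by a short inductive and computational argument directly from the recursion \eqref{eq:t-Pad}, and (b) by a spiral inductive construction in the spirit of Lemma~\ref{lem:spiral}(b), anchored at an explicitly displayed base tiling $\mathcal{T}_{12}$.

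For part (a), I first tabulate $q(8),\ldots,q(14)=8,11,9,19,20,28,39$. These settle $q(11)>12$ and the monotonicity $q(n)>q(n-1)$ for $n\in\{11,12,13,14\}$; together with the identity $q(n)-q(n-1)=q(n-2)-q(n-4)$ (obtained by applying \eqref{eq:t-Pad} to both sides of the difference) they extend monotonicity to all $n\ge 11$ by induction, whence $q(n)\ge q(11)=19>12$. For the second bullet I unroll \eqref{eq:t-Pad} twice to obtain $q(n-1)-q(n-2)=q(n-3)-q(n-5)=q(n-6)$ for $n\ge 14$. For $n\in\{12,13,14,15\}$ the relevant differences are $10,1,8,11$, none divisible by~$3$, so the quotient is non-integral and thus outside $S=\{2,3,5,7,8,9,11,12\}\cup\{q(m):m\ge 8\}$. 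For $n\ge 17$ the claim reads $\tfrac{1}{3}q(n-6)\notin S$: if $3\nmid q(n-6)$ the quotient is non-integral and we are done; otherwise $q(n-6)/3\ge 19/3>6$ excludes the small part of $S$, while $q(n-6)/3=q(m)$ would force $q(n-6)=3q(m)$, which is impossible by the asymptotic $q(n)\sim C\rho^n$ (with $\rho^3=\rho+1$) since $\rho^k=3$ admits no integer solution, leaving only finitely many pairs $(n-6,m)$ to eliminate numerically.

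For part (b) I proceed by induction on $n$. The base case $n=12$ consists in an explicit $12$-tile tiling $\mathcal{T}_{12}$ of a pentagon $Q_{12}$ with sides $8,11,9,19,20$ using one tile each of the sizes $3,9,11,12$ and two tiles each of $2,5,7,8$, arranged so that (i) every equal-size pair consists of two oppositely oriented triangles, giving t-perfectness, and (ii) the sides of the pentagon of lengths $9$ and $11$ are single tile sides (the side of length $8$ being the tolerated exception). For the inductive step I attach an equilateral triangle $T_{n+1}$ of size $q(n)$ to the side of $Q_n$ of length $q(n)$. At its two endpoints the angle inventory is $\pi/3+\pi/3=2\pi/3$ (at the $\pi/3$-vertex of $Q_n$, which stays a convex vertex) and $2\pi/3+\pi/3=\pi$ (at the adjacent $2\pi/3$-vertex between the sides $q(n-4)$ and $q(n)$, which becomes a $\pi$-vertex). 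The $q(n-4)$-side of $Q_n$ and the free side of $T_{n+1}$ opposite that $\pi$-vertex therefore merge into a single side of length $q(n-4)+q(n)=q(n+1)$, using $q(n-4)+q(n-3)=q(n-1)$ together with \eqref{eq:t-Pad}. The resulting pentagon has consecutive sides $q(n-3),q(n-2),q(n-1),q(n),q(n+1)$ with a $\pi/3$-angle between $q(n)$ and $q(n+1)$, which is precisely the required $Q_{n+1}$.

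It remains to propagate the two asserted properties. T-perfectness of $\mathcal{T}_{n+1}$ is immediate from part~(a): the new size $q(n)$ satisfies $q(n)>q(n-1)>\cdots>q(12)>12$ and hence does not coincide with any size already present in $\mathcal{T}_n$, so $T_{n+1}$ is the unique tile of its size and cannot be a translate of any earlier tile. For the side-matching bullet, the new side of $Q_{n+1}$ of length $q(n)$ is by construction the free side of the unique-size tile $T_{n+1}$; the sides $q(n-3),q(n-2),q(n-1)$ are inherited unchanged from $Q_n$ and handled by the induction hypothesis; and the new side of length $q(n+1)$ is not a tile size in $\mathcal{T}_{n+1}$ because all tile sizes are at most $q(n)$. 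The chief obstacle of the whole proof lies in the base case: one must realize eight distinct sizes in twelve tiles within a pentagon of the prescribed shape, with the duplicate pairs correctly oriented and with the sides of lengths $9$ and $11$ appearing as single tile sides, and this combinatorial-geometric picture is the indispensable input from which the spiral induction does all the remaining work.
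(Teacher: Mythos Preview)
Your inductive construction in part (b) is correct and matches the paper's spiral approach exactly; the identity $q(n-4)+q(n)=q(n+1)$ and the angle bookkeeping are right, and your propagation of t-perfectness and the side-matching property is sound. Your treatment of the first bullet of (a) is also fine.

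The genuine problem is in the second bullet of (a). First, the sentence ``$q(n-6)/3\ge 19/3>6$ excludes the small part of $S$'' does not do what you claim: it only rules out $2,3,5$, not $7,8,9,11,12$. For instance at $n=20$ one has $q(14)/3=13$, which is indeed outside $\{7,8,9,11,12\}$, but your inequality does not show that; you need either a direct check for small $n$ or the observation that $q(n-6)/3>12$ once $n\ge 22$. Second, and more seriously, your argument that $q(n-6)/3\ne q(m)$ is not a proof: the asymptotic $q(n)\sim C\rho^n$ tells you that $q(n-6)/q(m)\to\rho^{n-6-m}$, and the fact that $\rho^k=3$ has no integer solution means this ratio is never asymptotically~$3$, but you give no error bounds and no specification of which ``finitely many pairs'' you would then eliminate. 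As written this step is a heuristic, not an argument.

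The paper closes this gap differently and more cleanly: after checking $n\le 24$ explicitly, it proves by induction that
\[
q(n-10)\;<\;\tfrac{1}{3}\,q(n-6)\;<\;q(n-9)\qquad (n\ge 25),
\]
using the recursion on both sides (base cases $n=25,26,27$ by direct computation). Since the sequence $(q(k))_{k\ge 11}$ is strictly increasing, this sandwiches $\tfrac{1}{3}(q(n-1)-q(n-2))=\tfrac{1}{3}q(n-6)$ strictly between two consecutive values of $q$, so it cannot equal any $q(m)$; and it is already larger than $q(15)=48>12$, disposing of the finite set $\{2,3,5,7,8,9,11,12\}$ at the same time. This replaces your asymptotic hand-wave with a two-line induction.
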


\begin{proof}
The first part of (a) follows from \eqref{eq:t-Pad} by induction. The second part of (a) is shown for $n \le 24$ by explicit computation. For $n \ge 25$, it is proved by inductive verification of 
\[
q(n-10) < \frac{1}{3}(q(n-1)-q(n-2))=\frac{q(n-6)}{3} < q(n-9)
\] 
as follows: First note that 
\[
q(n-1)-q(n-2)=q(n-4)+q(n-3)-q(n-5)-q(n-4)=q(n-3)-q(n-5)=q(n-6).
\]
The base cases for $n=25,26,27$, obtained by computation, are
$q(15)=48 < \frac{q(19)}{3}=\frac{154}{3} < q(16)=67 < \frac{q(20)}{3}=\frac{202}{3} < q(17)=87 < \frac{q(21)}{3}=\frac{269}{3} < q(18)=115$. Finally, the step case for $n \ge 28$ is
\[
\frac{q(n-6)}{3}=\frac{q(n-9)}{3}+\frac{q(n-8)}{3}
\left\{
\begin{array}{l}
>q(n-13)+q(n-12)=q(n-10),\\[.5ex] 
< q(n-12)+q(n-11)=q(n-9).
\end{array}
\right.
\]

Also claim (b) is shown by induction: The pentagon $Q_{12}$ and the tiling $\mathcal{T}_{12}$ is illustrated in Figure~\ref{fig:t-spiral} (see also tiling (c) in the appendix). 
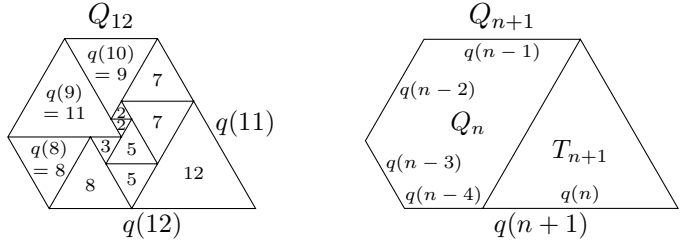
\begin{figure}
\begin{center}
\begin{tikzpicture}

\node at (-5,0) {
\begin{tikzpicture}[xscale=.0686,yscale=.0595]
\draw 
  (8,0)--(48,0)--(29,38)--(11,38)--(0,16)--(8,0)--(16,16)--(24,0)--(36,24)--(22,24)--(29,10)--(19,10)--(24,20)--(20,20)--(29,38)
  (0,16)--(22,16)--(11,38)
  (20,43) node {\large $Q_{12}$}
  (28,-3.8) node {$q(12)$}
  (46,19) node {$q(11)$}
  (29,28.67) node {\scriptsize $7$}
  (29,19.33) node {\scriptsize $7$}
  (36,8) node {\scriptsize $12$}
  (16,5.33) node {\scriptsize $8$}
  (8,10.67) node {\scriptsize $\begin{array}{c} q(8)\\ =8 \end{array}$}
  (11,23.33) node {\scriptsize $\begin{array}{c} q(9)\\ =11 \end{array}$}
  (20,32) node {\scriptsize $\begin{array}{c} q(10)\\ =9 \end{array}$}
  (22,18.6) node {\tiny $2$}
  (22,21.4) node {\tiny $2$}
  (24,13.33) node {\scriptsize $5$}
  (24,6.67) node {\scriptsize $5$}
  (19,14) node {\scriptsize $3$}
  ;
\end{tikzpicture}};

\node at (0,0) {
\begin{tikzpicture}[xscale=.1299,yscale=.15]
\draw 
  (22,15)--(6,15)--(0,6)--(4,0)--(32,0)--(22,15)--(12,0)
  (14,17) node {\large $Q_{n+1}$}
  (10.5,7.5) node {$Q_n$}
  (8,1.2) node {\scriptsize $q(n-4)$}
  (6,4) node {\scriptsize $q(n-3)$}
  (7.4,10.5) node {\scriptsize $q(n-2)$}
  (14,13.8) node {\scriptsize $q(n-1)$}
  (22,1.2) node {\scriptsize $q(n)$}
  (18,-1.5) node {$q(n+1)$}
  (22,5) node {$T_{n+1}$}
  ;
\end{tikzpicture}};

\end{tikzpicture}
\end{center}
\caption{Pentagons $Q_{12}$ and $Q_{n+1}$}
\label{fig:t-spiral}
\end{figure}
Then $Q_{n+1}$ and $\mathcal{T}_{n+1}$ are obtained from $Q_n$ and $\mathcal{T}_n$ by adding a triangle $T_{n+1}$ of size $q(n)$ at the side of size $q(n)$ of $Q_n$, cf.\ Figure~\ref{fig:t-spiral}. 
\end{proof}


\subsection{Positive results on domains and lower estimates}

We start with a direct consequence of Lemma~\ref{lem:t-spiral}.

\begin{corollary}\label{cor:t-lower}
\renewcommand{\labelenumi}{\bf (\alph{enumi})}
\begin{enumerate}
\item 
$\{15,16,\ldots\}\subseteq\DOM\left(\STRIp\right)$ and $\STRIp(n) \ge n-6$ for all $n=15,16,\ldots$

\item
$\{13,14,\ldots\}\subseteq\DOM\left(\STRAp\right)$ and $\STRAp(n)\ge n-5$ for all $n=13,14,\ldots$

\item
$\{13,14,\ldots\}\subseteq\DOM\left(\SPARp\right)$ and $\SPARp(n)\ge n-5$ for all $n=13,14,\ldots$

\item
$\{12,13,\ldots\}\subseteq\DOM\left(\SPENTp\right)$ and
$\SPENTp(n)\ge n-4$ for all $n=12,13,\ldots$

\item
$\{17,18,\ldots\} \setminus \{21\} \subseteq\DOM\left(\SHEXp\right)$ and $\SHEXp(n) \ge n-6$ for all $n=17,18,19,20;\; 22,23,\ldots$
\end{enumerate}
\end{corollary}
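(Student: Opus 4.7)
The plan is to derive all five parts directly from Lemma~\ref{lem:t-spiral} by attaching a handful of equilateral triangles externally to the pentagon $Q_m$ at carefully chosen sides. The common t-perfectness argument I use throughout rests on the second bullet of Lemma~\ref{lem:t-spiral}(b): whenever a side of $Q_m$ has length $\ne 8$ equal to the size of some $T_i\in\mathcal{T}_m$, that side already is a side of $T_i$, and $T_i$ is the unique triangle of that size in $\mathcal{T}_m$. Attaching an external equilateral triangle of matching size along such a side pairs it with $T_i$ into a rhombus (a reflection pair, not translates), while attaching one of an unmatched size introduces a brand-new size; either way no pair of translates is created, so t-perfectness propagates from $\mathcal{T}_m$ to the enlarged tiling. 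Part (d) is immediate: $\mathcal{T}_n$ already tiles $Q_n$, and because $q$ is strictly increasing from index $10$ on and $q(k)>12$ for $k\ge 11$, the list $\{2,3,5,7,8,9,11,12\}\cup\{q(12),\ldots,q(n-1)\}$ consists of exactly $n-4$ distinct values.

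For (a)-(c) the construction is a one- or two-triangle extension of $Q_m$. For (a), given $n\ge 15$, I take $m=n-2\ge 13$ and attach equilateral triangles of sizes $q(m-4)$ and $q(m-2)$ along sides $AB$ and $CD$ of $Q_m$; the recurrences $q(m-1)=q(m-4)+q(m-3)$ and $q(m)=q(m-3)+q(m-2)$ collapse the augmented polygon into an equilateral triangle. For (b), $n\ge 14$, I take $m=n-1\ge 13$ and attach a single triangle along side $AB$ (length $q(m-4)\ne 8$); the resulting quadrilateral has cyclic angles $\tfrac{\pi}{3},\tfrac{\pi}{3},\tfrac{2\pi}{3},\tfrac{2\pi}{3}$ with unequal parallel sides $q(m-2)$ and $q(m+1)=q(m-1)+q(m-2)$, hence a genuine trapezoid; for the boundary case $n=13$, I attach instead at side $CD$ of $Q_{12}$ (length $q(10)=9\ne 8$), which also yields a trapezoid. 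For (c), $n\ge 13$, I take $m=n-1\ge 12$ and attach at the middle side $BC$ of length $q(m-3)\ne 8$; the resulting quadrilateral has alternating cyclic angles $\tfrac{2\pi}{3},\tfrac{\pi}{3},\tfrac{2\pi}{3},\tfrac{\pi}{3}$ and opposite sides of equal lengths $q(m-1)$ and $q(m)$, forcing a parallelogram. In each case every added side length is $\ne 8$, so the t-perfectness argument above applies, and the count of distinct sizes is at least $s(\mathcal{T}_m)$, which equals $n-6$, $n-5$, $n-5$ in (a), (b), (c) respectively.

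For (e), given $n\ge 17$ with $n\ne 21$, I take $m=n-5$, so that $m\ge 12$ and $m\ne 16$. The plan is to attach a trapezoidal region of $5$ equilateral triangles along side $DE$ of $Q_m$ (of length $q(m-1)$), designed so that (i) the region together with $Q_m$ is a convex hexagon with all angles $\tfrac{2\pi}{3}$; (ii) it contains at least one triangle of the new size $t=\tfrac13(q(m-1)-q(m-2))$, which by the second bullet of Lemma~\ref{lem:t-spiral}(a) lies outside $\{2,3,5,7,8,9,11,12\}\cup\{q(k):k\ge 8\}$ precisely when $m\ne 16$; and (iii) the remaining triangles in the region either are of further new sizes or meet sides of $Q_m$ with matching lengths, so t-perfectness is preserved via the rhombus principle. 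Counting sizes then gives $s\ge (m-4)+3=n-6$, proving both $n\in\DOM(\SHEXp)$ and $\SHEXp(n)\ge n-6$. The main obstacle is the concrete geometric realization in (e): I need to exhibit a $5$-triangle tileable region of the correct outline, verify that its tile sizes are pairwise non-translates and also non-translates of any tile in $\mathcal{T}_m$, and check convexity of the union with $Q_m$. The exclusion of $n=21$ corresponds exactly to $m=16$, the single value at which condition (ii) fails because $t=q(10)/3=3=\SIZE(T_3)$ is already present in $\mathcal{T}_m$.
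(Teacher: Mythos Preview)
Your treatment of parts (a)--(d) is essentially the paper's own argument: build on $Q_m$ and $\mathcal{T}_m$, attach one or two external triangles at sides of length $q(m-4)$, $q(m-3)$, or $q(m-2)$, and invoke the second bullet of Lemma~\ref{lem:t-spiral}(b) to keep t-perfectness. Your handling of the exceptional side length $8$ (forcing $m\ge 13$ in (a) and the split treatment in (b)) is correct and matches the paper's reasoning.

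Part (e), however, has a genuine gap, and your geometric picture is off. You propose attaching a \emph{trapezoidal} region of five triangles along the single side $DE$ of length $q(m-1)$. But you never exhibit such a trapezoid: you list desiderata (i)--(iii) and then admit that ``the main obstacle is the concrete geometric realization''. Without explicit sizes you cannot verify that the five triangles are pairwise non-translates, nor that their sizes avoid the list $\{2,3,5,7,8,9,11,12\}\cup\{q(k):k\ge 8\}$ beyond the single value $t$ you mention. The paper's construction is different and fully explicit: the five new triangles are glued over the \emph{two} adjacent sides of lengths $q(m-2)$ and $q(m-1)$ (so the added region is a non-convex pentagon, with a reflex angle at the vertex between those sides, not a trapezoid). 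Their sizes are
\[
t=\tfrac{1}{3}\bigl(q(m-1)-q(m-2)\bigr),\quad
\tfrac{2}{3}q(m-2)+\tfrac{1}{3}q(m-1)\ \text{(twice)},\quad
\tfrac{1}{3}q(m-2)+\tfrac{2}{3}q(m-1)\ \text{(twice)}.
\]
The last two sizes lie strictly between $q(m-2)$ and $q(m-1)$ by the monotonicity in Lemma~\ref{lem:t-spiral}(a), hence are new; the size $t$ is new precisely when $m\ne 16$ by the second bullet of Lemma~\ref{lem:t-spiral}(a). This yields three new sizes and the bound $(m-4)+3=n-6$. Your outline had the right target and the right exceptional value $m=16$, but it stopped short of the construction that makes the argument go through.
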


\begin{proof}
The pentagons $Q_n$ and tilings $\mathcal{T}_n$ from Lemma~\ref{lem:t-spiral}(b) prove (d). 

For (a), we add two triangles of sizes $q(n-4)$ and $q(n-2)$ at the respective sides of $Q_n$, cf.\ Figure~\ref{fig:derived polygons}(a). This gives a t-perfect tiling of a triangle by $n+2$ tiles of at least $n-4=(n+2)-6$ different sizes if $n \ge 13$. (For $n=12$, t-perfectness is violated, since the size $q(12-4)=8$ is not allowed, see the last claim of Lemma~\ref{lem:t-spiral}(b).)

For (b) and (c), we add a triangle of size $q(n-2)$ or $q(n-3)$, respectively, to $Q_n$ and $\mathcal{T}_n$ for $n \ge 12$, cf.\ Figure~\ref{fig:derived polygons}.

For (e), we add five triangles $T'_{n+1},\ldots,T'_{n+5}$ of sizes
\[
\SIZE(T'_{k})=\left\{
\begin{array}{ll}
\frac{1}{3}\big(q(n-1)-q(n-2)\big),& k=n+1, \\[1ex]
\frac{2}{3}q(n-2)+\frac{1}{3}q(n-1),& k=n+2,n+3,\\[1ex]
\frac{1}{3}q(n-2)+\frac{2}{3}q(n-1),& k=n+4,n+5,\\
\end{array}
\right.
\]
over the sides of lengths $q(n-2)$ and $q(n-1)$ of $Q_n$, see Figure~\ref{fig:thex}.
\begin{figure}
\begin{center}
\begin{tikzpicture}[xscale=.0577,yscale=.2]

\draw 
  (52,0)--(68,8)--(52,16)--(24,16)--(0,4)--(8,0)--(52,0)--(34,9) 
  (10,9)--(38,9)--(24,16)
  (52,16)--(36,8)--(68,8)
  (36,8.5)--(39.3,8.9)
  (12,3.5) node {$Q_n$}
  (45,8.9) node {\scriptsize $T'_{n+1}$}
  (24,11.33) node {$T'_{n+2}$}
  (38,13.67) node {$T'_{n+3}$}
  (52,10.67) node {$T'_{n+4}$}
  (52,5.33) node {$T'_{n+5}$}
  (22,8) node {\scriptsize $q(n-2)$}
  (32.5,4.5) node {\scriptsize $q(n-1)$}
  (30,1) node {\scriptsize $q(n)$}
  ;

\end{tikzpicture}
\end{center}
\caption{Proof of Corollary~\ref{cor:t-lower}(e)}
\label{fig:thex}
\end{figure}
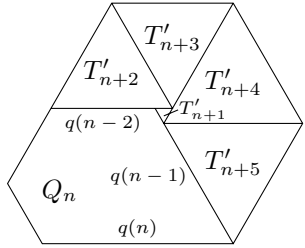
By Lemma~\ref{lem:t-spiral}(a), $\SIZE(T'_{n+1}) \ne \SIZE(T_i)$ for $i=1,\ldots,n$ if $n \in \{12,13,\ldots\} \setminus \{16\}$. Moreover, the monotonicity from Lemma~\ref{lem:t-spiral}(a) gives
\[
q(n-2) < \SIZE(T'_{n+2})=\SIZE(T'_{n+3}) < \SIZE(T'_{n+4})=\SIZE(T'_{n+5}) < q(n-1),
\] 
so that the sizes of the new triangles are different from those of $\mathcal{T}_n$. Thus $\mathcal{T}_n$ together with $T'_{n+1},\ldots,T'_{n+5}$ constitutes a t-perfect tiling by $n+5$ triangles of $(n-4)+3=(n+5)-6$ different sizes. This proves (e). 
\end{proof}

Now the remaining (non-trivial) claims from Theorem~\ref{thm:perfect} on domains and lower estimates are
\begin{align}
&11;\;14,15,16;\;21 \in \DOM\left(\SHEXp\right),\label{eq:dthex}\\
&\STRIp(n) \ge n-5, \quad n=15;\; 17,18,\ldots,26;\; 28,\label{eq:sttri}\\
&\STRAp(n) \ge n-4, \quad n= 14;\; 16,17,\ldots,25;\; 27,\label{eq:sttra}\\
&\SPARp(n)  \ge n-4, \quad n= 15;\; 18,19;\; 21,22,23;\; 26,\label{eq:stpar}\\
&\SHEXp(n) \ge \left\{
\begin{array}{ll}
n-4, & n=11;\; 14,15;\; 17,18,19;\; 22,\\
n-5, & n=16;\; 20,21;\; 23.
\end{array}
\right.
\label{eq:sthex}
\end{align}

\begin{proof}[Proof of \eqref{eq:dthex} and \eqref{eq:sthex}]
It is enough to show that the lower bounds from \eqref{eq:sthex} are attained by t-perfect tilings of the respective cardinalities. We refer to the tilings (c), (j), (k), (m), (n), (o), (q) and (t), (u), (v), (w) from the appendix and Table~\ref{tab:hexagons}. 
\end{proof}

\begin{proof}[Proof of \eqref{eq:sttri}, \eqref{eq:sttra} and \eqref{eq:stpar}]
These estimates are established by tilings constructed as follows: We start with a particular tiling from the appendix. Then we add successively triangles of prescribed sizes such that in each step the new triangle is placed over the side of the respective length of the previously tiled polygon. Table~\ref{tab:parameters} summarizes all parameters. 

Several of the above tilings showing that $\STRIp(n) \ge n-5$ can be found in the literature: for $n=15$ see \cite[Figure 1 and the text thereafter]{tutte1948}, \cite[Figure 4]{tuza1991}, \cite[Figure 8, second tiling]{drapal2010} and \cite[perfect\underline{ }dissection\underline{ }size15\underline{ }595\underline{ }r5\underline{ }c3]{hamalainen}, for $n=17$ see \cite[Figure 10, first tiling]{drapal2010} and \cite[perfect\underline{ }dissection\underline{ }size17\underline{ }3091\underline{ }r3\underline{ }c0]{hamalainen}, for $n=18$ see \cite[perfect\underline{ }dissection\underline{ }size18\underline{ }30413\underline{ }r6\underline{ }c3]{hamalainen}, for $n=19$ see \cite[perfect\underline{ }dissection\underline{ }size19\underline{ }130975\underline{ }r6\underline{ }c4]{hamalainen}. 
\end{proof}

\begin{table}
\caption{Tilings proving claims \eqref{eq:sttri}, \eqref{eq:sttra} and
\eqref{eq:stpar}: the table gives the respective initial tilings from the appendix and the sizes of new triangles in the order of their addition.\label{tab:parameters}}

\begin{tabular}{|c|p{50mm}||c|p{50mm}|}
\hline
\multicolumn{2}{|c||}{Claim \eqref{eq:sttri}} & \multicolumn{2}{|c|}{Claim \eqref{eq:sttra}}\\
\hline
$n$ && $n$ &\\
\hline
15 & (c); 12, 19, 20, 11 &
14 & (c); 12, 19, 20 \\
17 & (c); 12, 19, 28, 39, 20, 28 &
16 & (c); 12, 19, 28, 39, 20\\
18 & (j); 27, 44, 47, 24 &
17 & (j); 27, 44, 47\\
19 & (k); 20, 32, 33, 17 &
18 & (k); 20, 32, 33\\
20 & (j); 27, 44, 67, 91, 47, 67 &
19 & (j); 27, 44, 67, 91, 47\\
21 & (m); 64, 106, 111, 59 &
20 & (m); 64, 106, 111\\
22 & (n); 102, 157, 162, 84 &
21 & (n); 102, 157, 162\\
23 & (o); 138, 213, 220, 114 &
22 & (o); 138, 213, 220\\
24 & (n); 102, 157, 235, 319, 162, 235 &
23 & (n); 102, 157, 235, 319, 162\\
25 & (o); 138, 213, 319, 433, 220, 319 &
24 & (o); 138, 213, 319, 433, 220\\
26 & (q); 325, 533, 534, 283 &
25 & (q); 325, 533, 534\\
28 & (q); 325, 533, 784, 1067, 534, 784 &
27 & (q); 325, 533, 784, 1067, 534\\
\hline\hline
\multicolumn{4}{|c|}{Claim \eqref{eq:stpar}}\\
\hline
$n$ && $n$ &\\
\hline
15 & (c); 12, 19, 28, 20 &
22 & (n); 102, 157, 235, 162\\
18 & (j); 27, 44, 67, 47 &
23 & (o); 138, 213, 319, 220 \\
19 & (k); 20, 32, 48, 33 &
26 & (q); 325, 533, 784, 534 \\
21 & (m); 64, 106, 158, 111 &&\\
\hline
\end{tabular}
\end{table}


\subsection{Upper estimates}

The property of t-perfectness gives a new upper estimate for pentagons.

\begin{lemma}\label{lem:necessary_t-perfect}
$\SPENTp(n) \le n-4$ for all $n \in \DOM\left(\SPENTp\right)$.
\end{lemma}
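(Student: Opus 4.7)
The plan is to bound $s(\mathcal{T}) \le n-4$ for any t-perfect tiling $\mathcal{T}$ of a convex pentagon $P$ by case analysis on the number $v_\pi$ of $\pi$-vertices, using Lemma~\ref{lem:necessary} together with a structural observation at the $\pi$-vertex in the critical case $v_\pi = 1$.

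First I would exploit t-perfectness directly: every pair of triangles sharing a full side consists of two congruent tiles (the shared side forces equal size), and in a t-perfect tiling every size class contains at most two tiles. Hence the number of side-sharing pairs is at most the number of $2$-tile size classes, which equals $n-s(\mathcal{T})$. Combined with Lemma~\ref{lem:necessary} applied to the pentagon (giving at least $5+v_\pi-3=v_\pi+2$ such pairs), this yields $s(\mathcal{T})\le n-v_\pi-2$. For $v_\pi\ge 2$ the claim follows immediately. The case $v_\pi=0$ is dispatched by reusing Case~1 of the proof of \eqref{eq:uppen}, which forces $n=4$ with three mutually congruent tiles whose orientations (only two possibilities) force two of them to be translates---incompatible with t-perfectness---so this case cannot occur for $n\in\DOM(\SPENTp)$.

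The main obstacle is $v_\pi=1$, where the direct bound gives only $s(\mathcal{T})\le n-3$. Here I would strengthen the Euler-formula bookkeeping of Lemma~\ref{lem:necessary} by showing that t-perfectness forces $v_6\ge 1$ (some interior $2\pi$-vertex must exist). Place the unique $\pi$-vertex $u$ at the origin with $\partial P$ locally horizontal and $P$ above. The three tiles $T_A,T_B,T_C$ meeting at $u$ occupy, from left to right, the angular sectors $[2\pi/3,\pi]$, $[\pi/3,2\pi/3]$, $[0,\pi/3]$. A direct coordinate check shows that the two outer tiles $T_A$ and $T_C$ are both upward-oriented equilateral triangles ($\triangle$) while $T_B$ is downward-oriented ($\triangledown$). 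Two congruent equilateral triangles of the same orientation are translates of each other, so t-perfectness gives $\SIZE(T_A)\ne\SIZE(T_C)$, whence at least one of $\SIZE(T_A)\ne\SIZE(T_B)$, $\SIZE(T_B)\ne\SIZE(T_C)$ holds. On the common ray from $u$ separating the two tiles of unequal size, the far vertex $w$ of the smaller tile lies strictly between $u$ and the far vertex of the larger one; since this ray enters $P$ transversally to $\partial P$ at $u$ and $P$ is convex, $w$ lies strictly in the interior of $P$, which yields an interior $2\pi$-vertex and hence $v_6\ge 1$.

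Finally, plugging $v_6\ge 1$ and $f_{10}\ge 1$ (the unbounded face has $2(2m-6+v_\pi)=10$ edges) into the identity $f_4 = 1+3v_6+\sum_{i\ge 3}(i-3)f_{2i}$ from the proof of Lemma~\ref{lem:necessary} (specialized to $m=5$) gives $f_4\ge 1+3+2=6$. Hence $\mathcal{T}$ has at least six side-sharing pairs, so $n-s(\mathcal{T})\ge 6$ and $s(\mathcal{T})\le n-6\le n-4$. The orientation/chord argument forcing $v_6\ge 1$ is the step I expect to be the main obstacle; once it is in place, the Euler-type computation is routine.
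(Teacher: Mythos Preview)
Your handling of $v_\pi=0$ and $v_\pi\ge 2$ is correct, and the device of bounding the number of side-sharing pairs above by $n-s(\mathcal{T})$ via t-perfectness is sound.

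The gap is in the case $v_\pi=1$. You assert that the interior vertex $w$ you construct yields $v_6\ge 1$, identifying ``interior $2\pi$-vertex'' with ``degree-$6$ node of $\Gamma$''. But in the graph $\Gamma$ of Lemma~\ref{lem:necessary}, the degree of a black node counts only those triangles having that point as a \emph{corner}. Your $w$ is the apex of the smaller of the two unequal tiles and lies in the relative interior of a side of the larger one; that larger triangle therefore contributes angle $\pi$ at $w$ without contributing an edge of $\Gamma$ at $w$. The residual angle $2\pi-\pi-\frac{\pi}{3}=\frac{2\pi}{3}$ is filled by exactly two further triangles meeting $w$ at a corner, so $w$ has degree $3$, not $6$. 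It contributes to $v_3$, and the identity $f_4=1+3v_6+\sum_{i\ge 3}(i-3)f_{2i}$ then gives only $f_4\ge 3$, i.e.\ $s(\mathcal{T})\le n-3$, precisely the bound you set out to beat. Producing a genuine degree-$6$ vertex (six triangles meeting corner-to-corner) from t-perfectness is not automatic, and nothing in your argument supplies one.

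The paper avoids this local analysis altogether by a minimal-counterexample argument: cut off the exposed tile at the $\frac{\pi}{3}$-vertex to obtain a t-perfect tiling $\mathcal{T}'$ of a convex polygon $P'$ with $|\mathcal{T}'|=n_0-1\ge 11$ and $s(\mathcal{T}')\ge|\mathcal{T}'|-3$; Theorem~\ref{thm:general} then forces $P'$ to be a pentagon, contradicting minimality of $n_0$.
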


\begin{proof}
Assume that Lemma~\ref{lem:necessary_t-perfect} fails. Let $n_0 \in \DOM\left(\SPENTp\right) \stackrel{\mbox{\eqref{eq:nDPENTp}}}{\subseteq}\{12,13,\ldots\}$ be minimal such that $\SPENTp(n_0) \ge n_0-3$, and let $\mathcal{T}=\{T_1,\ldots,T_{n_0}\}$ be a tiling of some convex pentagon $P$ realizing $s(\mathcal{T})=\SPENTp(n_0)$. Cutting off the exposed triangle, we obtain a t-perfect tiling $\mathcal{T}'$ of some convex polygon $P'$ with $|\mathcal{T}'|=n_0-1$ and 
\[
s(\mathcal{T}') \ge s(\mathcal{T})-1 = \SPENTp(n_0)-1 \ge (n_0-3)-1= |\mathcal{T}'|-3.
\]
By $|\mathcal{T}'|=n_0-1 \ge 11$, Theorem~\ref{thm:general} tells us that $s(\mathcal{T}')=|\mathcal{T}'|-3$ and $P'$ is a pentagon. This contradicts the minimality of $n_0$.
\end{proof}

The estimate $\STRIp(16)\le 10$ from Theorem~\ref{thm:perfect}(a) is shown in \cite[Section~3.2 and Figure~9]{drapal2010}. The remaining upper estimates in Theorem~\ref{thm:perfect} follow from Theorem~\ref{thm:general} and the trivial relations $\SASTp(n) \le \SAST(n)$ for $n \in \DOM\left(\SASTp\right)$.


\section*{Appendix. Particular tilings of hexagons}

The following illustrations are scaled such that the smallest tiles appear congruent. Sizes of larger tiles are indicated in the figures. Sizes of all tiles are summarized in Table~\ref{tab:hexagons}.
\\[2ex]
\begin{tikzpicture}[xscale=.09,yscale=.0779]
\draw
  (0,3)--(3,9)--(8,9)--(10.5,4)--(8.5,0)--(1.5,0)--(0,3)--(6,3)--(3,9)
  (8,9)--(5.5, 4)--(10.5,4)
  (1.5,0)--(3,3)--(4.5,0)--(6.5,4)--(8.5,0)
  ;
\node at (6.5,1.33) {\scriptsize $4$};  
\node at (8.5,2.67) {\scriptsize $4$};  
\node at (5.5,7.33) {\small $5$};  
\node at (8,5.67) {\small $5$};  
\node at (3,5) {$6$};    
\end{tikzpicture}(a)\quad
\begin{tikzpicture}[xscale=.09,yscale=.0779]
\draw
  (0,4)--(4,12)--(11,12)--(14,6)--(11,0)--(2,0)--(0,4)--(8,4)--(4,12)
  (2,0)--(4,4)--(6,0)--(8.5,5)--(7.5,5)--(11,12)
  (14,6)--(8,6)--(11,0)
  ;
\node at (2,2.67) {\scriptsize $4$};  
\node at (4,1.33) {\scriptsize $4$};  
\node at (6,2.67) {\scriptsize $4$};  
\node at (8.5,1.67) {\small $5$};  
\node at (11,4) {$6$};  
\node at (11,8) {$6$};  
\node at (7.5,9.67) {$7$};  
\node at (4,6.67) {$8$};  
\end{tikzpicture}(b)\quad
\begin{tikzpicture}[xscale=-.045,yscale=.0389]
\draw
  (0,8)--(4,16)--(15,16)--(19.5,7)--(16,0)--(4,0)--(0,8)--(8,8)--(4,0)
  (19.5,7)--(10.5,7)--(11.5,5)--(6.5,5)--(9,0)--(12.5,7)--(16,0)
  (15,16)--(9.5,5)--(4,16)
  ;
\node at (9.5,12.67) {\scriptsize $11$};    
\node at (15,10) {\footnotesize $9$};    
\node at (4,10.67) {\scriptsize $8$};    
\node at (4,5.33) {\scriptsize $8$};    
\node at (16,4.67) {\tiny $7$};    
\node at (12.5,2.33) {\tiny $7$};    
\end{tikzpicture}(c)\quad
\begin{tikzpicture}[xscale=.09,yscale=.0779]
\draw
  (0,7)--(5,17)--(14,17)--(18.5,8)--(14.5,0)--(3.5,0)--(0,7)--(10,7)--(5,17)
  (3.5,0)--(7,7)--(10.5,0)--(12.5,4)--(8.5,4)--(10.5,8)--(14.5,0)
  (14,17)--(9.5,8)--(18.5,8)
  ;
\node at (10.5,5.33) {\scriptsize $4$};  
\node at (10.5,2.67) {\scriptsize $4$};  
\node at (12.5,1.33) {\scriptsize $4$};  
\node at (3.5,4.67) {$7$};  
\node at (7,2.33) {$7$};  
\node at (14.5,5.33) {$8$};  
\node at (14,11) {$9$};  
\node at (9.5,14) {$9$};
\node at (5,10.33) {$10$};     
\end{tikzpicture}(d)\quad
\begin{tikzpicture}[xscale=.09,yscale=.0779]
\draw
  (0,10)--(6.5,23)--(18.5,23)--(24.5,11)--(19,0)--(5,0)--(0,10)--(13,10)--(6.5,23)
  (5,0)--(10,10)--(11.5,7)--(13.5,11)--(19,0)
  (12,0)--(8.5,7)--(15.5,7)--(12,0)
  (18.5,23)--(12.5,11)--(24.5,11)
  ;
\node at (13.5,8.33) {\scriptsize $4$};  
\node at (8.5,2.33) {$7$};  
\node at (12,4.67) {$7$};  
\node at (15.5,2.33) {$7$};  
\node at (5,6.67) {$10$};  
\node at (19,7.33) {$11$};  
\node at (18.5,15) {$12$};  
\node at (12.5,19) {$12$};  
\node at (6.5,14.33) {$13$};  
\end{tikzpicture}(e)\quad
\\[2ex]
\begin{tikzpicture}[xscale=.09,yscale=.0779]
\draw
  (0,7)--(7,21)--(19,21)--(24.5,10)--(19.5,0)--(3.5,0)--(0,7)--(14,7)--(7,21)
  (3.5,0)--(7,7)--(10.5,0)--(15,9)--(13,9)--(13.5,10)--(14,9)--(14.5,10)--(19.5,0)
  (19,21)--(13.5,10)--(24.5,10)
  ;
\node at (3.5,4.67) {$7$};  
\node at (7,2.33) {$7$};  
\node at (10.5,4.67) {$7$};  
\node at (15,3) {$9$};  
\node at (19.5,6.67) {$10$};  
\node at (19,13.67) {$11$};  
\node at (13,17) {$12$};  
\node at (7,11.67) {$14$};  
\end{tikzpicture}(f)\quad
\begin{tikzpicture}[xscale=-.045,yscale=.0389]
\draw
  (0,14)--(7,28)--(26,28)--(33.5,13)--(27,0)--(7,0)--(0,14)--(14,14)--(7,0)
  (27,0)--(20.5,13)--(19.5,11)--(21.5,11)--(16,0)--(11.5,9)--(20.5,9)--(18.5,13)--(33.5,13)
  (7,28)--(16.5,9)--(26,28)
  ;
\node at (11.5,3) {\scriptsize $9$};  
\node at (16,6) {\scriptsize $9$};  
\node at (21.5,4) {\tiny $11$};  
\node at (27,8.67) {\footnotesize $13$};  
\node at (7,9.33) {\footnotesize $14$};  
\node at (7,18.67) {\footnotesize $14$};  
\node at (26,18) {\small $15$};  
\node at (16.5,21.67) {$19$};  
\end{tikzpicture}(g)\quad
\begin{tikzpicture}[xscale=.03,yscale=.0259]
\draw
(0,16)--(10.5,37)--(36.5,37)--(46.5,17)--(38,0)--(8,0)--(0,16)--(21,16)--(18.5,11)--(29.5,11)--(26.5,17)--(46.5,17)
  (8,0)--(16,16)--(24,0)--(31,14)--(28,14)--(29.5,17)--(38,0)
  (10.5,37)--(23.5,11)--(36.5,37)
  ;
\node at (8,10.67) {\tiny $16$};  
\node at (16,5.33) {\tiny $16$};  
\node at (38,11.33) {\scriptsize $17$};  
\node at (36.5,23.67) {\small $20$};  
\node at (10.5,23) {\small $21$};  
\node at (23.5,28.33) {$26$};  
\end{tikzpicture}(h)\quad
\begin{tikzpicture}[xscale=.045,yscale=.0389]
\draw
  (0,17)--(11.5,40)--(32.5,40)--(43,19)--(33.5,0)--(8.5,0)--(0,17)--(23,17)--(11.5,40)
  (8.5,0)--(17,17)--(20,11)--(24,19)--(33.5,0)
  (22.5,0)--(15.5,14)--(18.5,14)--(17,11)--(28,11)--(22.5,0)
  (32.5,40)--(22,19)--(43,19)
  ;
\node at (24,13.67) {\tiny $8$};  
\node at (28,3.67) {\tiny $11$};  
\node at (22.5,7.33) {\tiny $11$};  
\node at (15.5,4.67) {\small $14$};  
\node at (8.5,11.33) {$17$};  
\node at (33.5,12.67) {$19$};  
\node at (32.5,26) {$21$};  
\node at (22,33) {$21$};  
\node at (11.5,24.67) {$23$};  
\end{tikzpicture}(i)\quad
\\[2ex]
\begin{tikzpicture}[xscale=.09,yscale=.0779]
\draw (0,17)--(11.5,40)--(35.5,40)--(45.5,20)--(35.5,0)--(8.5,0)--(0,17)--(23,17)--(20,11)--(17,17)--(8.5,0)
  (19.5,0)--(14,11)--(25,11)--(22.5,16)--(27.5,16)--(19.5,0)
  (11.5,40)--(23.5,16)--(35.5,40)
  (45.5,20)--(25.5,20)--(35.5,0)
  ;
\node at (25.5,17.33) {\scriptsize $4$};  
\node at (25,14.33) {\small $5$};  
\node at (22.5,12.67) {\small $5$};  
\node at (20,15) {$6$};  
\node at (17,13) {$6$};  
\node at (14,3.67) {$11$};  
\node at (19.5,7.33) {$11$};  
\node at (27.5,5.33) {$16$};  
\node at (8.5,10.67) {$17$};  
\node at (35.5,13.33) {$20$};  
\node at (35.5,26.67) {$20$};  
\node at (11.5,24.33) {$23$};  
\node at (23.5,32) {$24$};   
\end{tikzpicture}(j)\quad
\begin{tikzpicture}[xscale=.09,yscale=.0779]
\draw (0,12)--(8,28)--(25,28)--(32.5,13)--(26,0)--(6,0)--(0,12)--(16,12)--(14,8)--(22,8)--(18,0)--(12,12)--(6,0)
  (26,0)--(19.5,13)--(17,8)--(15.5,11)--(18.5,11)--(17.5,13)--(32.5,13)
  (8,28)--(16.5,11)--(25,28)  
  ;
\node at (14,10.67) {\scriptsize $4$};  
\node at (19.5,9.67) {\small $5$};  
\node at (18,5.33) {$8$};  
\node at (22,2.67) {$8$};  
\node at (6,8) {$12$};  
\node at (12,4) {$12$};  
\node at (26,8.67) {$13$};  
\node at (25,18) {$15$};  
\node at (8,17.33) {$16$};  
\node at (16.5,22.33) {$17$};    
\end{tikzpicture}(k)\quad
\begin{tikzpicture}[xscale=.03,yscale=.0259]
\draw
  (0,30)--(19,68)--(62,68)--(79,34)--(62,0)--(15,0)--(0,30)--(38,30)--(34,22)--(30,30)--(15,0)
  (37,0)--(26,22)--(37,22)--(31.5,11)--(42.5,11)--(35.5,25)--(49.5,25)--(37,0)
  (19,68)--(40.5,25)--(62,68)
  (79,34)--(45,34)--(62,0)
  ;
\node at (26,7.33) {$22$};  
\node at (49.5,8.33) {$25$};  
\node at (15,20) {$30$};  
\node at (62,22.66) {$34$};  
\node at (62,45.33) {$34$};  
\node at (19,42.67) {$38$};  
\node at (40,54) {$43$};  
\end{tikzpicture}(l)\quad
\\[2ex]
\begin{tikzpicture}[xscale=.03,yscale=.0259]
\draw
(0,42)--(26,94)--(85,94)--(108.5,47)--(85,0)--(21,0)--(0,42)--(52,42)--(47,32)--(50,32)--(42,16)--(58,16)--(48.5,35)--(67.5,35)--(50,0)--(35.5,29)--(48.5,29)--(42,42)--(21,0)
  (26,94)--(55.5,35)--(85,94)
  (108.5,47)--(61.5,47)--(85,0)
  ;
\node at (50,21.33) {\tiny $16$};  
\node at (50,10.67) {\tiny $16$};  
\node at (58,28.67) {\footnotesize $19$};  
\node at (35.5,9.67) {$29$};  
\node at (67.5,11.67) {$35$};  
\node at (21,28) {$42$};  
\node at (85,31.33) {$47$};  
\node at (85,62.67) {$47$};  
\node at (26,59.33) {$52$};  
\node at (55.5,74.33) {$59$};  
\end{tikzpicture}(m)\quad
\begin{tikzpicture}[xscale=-.045,yscale=.0389]
\draw (0,60)--(36.5,133)--(120.5,133)--(159.5,55)--(132,0)--(30,0)--(0,60)--(73,60)--(66.5,47)--(60,60)--(30,0)
  (132,0)--(104.5,55)--(77,0)--(53.5,47)--(68.5,47)--(61,32)--(93,32)--(81.5,55)--(159.5,55)
  (76,32)--(67.5,49)--(84.5,49)--(76,32)
  (36.5,133)--(78.5,49)--(120.5,133)
  ;
\node at (73,52.67) {\scriptsize $11$};  
\node at (66.5,55.67) {\footnotesize $13$};  
\node at (60,51.33) {\footnotesize $13$};  
\node at (61,42) {$15$};  
\node at (68.5,37) {$15$};  
\node at (76,43.33) {$17$};  
\node at (84.5,37.67) {$17$};  
\node at (93,47.33) {$23$};  
\node at (77,21.67) {$32$};  
\node at (53.5,15.67) {$47$};  
\node at (104.5,18.33) {$55$};  
\node at (132,36.67) {$55$};  
\node at (30,40) {$60$};  
\node at (36.5,84.33) {$73$};  
\node at (120.5,81) {$78$};  
\node at (78.5,105) {$84$};  
\end{tikzpicture}(n)\quad
\\[2ex]
\begin{tikzpicture}[xscale=-.045,yscale=.0389]
\draw (0,82)--(49.5,181)--(163.5,181)--(216.5,75)--(179,0)--(41,0)--(0,82)--(99,82)--(90.5,65)--(92.5,65)--(82,44)--(126,44)--(110.5,75)--(216.5,75)
  (41,0)--(82,82)--(91.5,63)--(72.5,63)--(104,0)--(141.5,75)--(179,0)
  (103,44)--(91.5,67)--(114.5,67)--(103,44)
  (49.5,181)--(106.5,67)--(163.5,181)
  ;
\node at (110.5,69.67) {\scriptsize $8$};  
\node at (99,72) {\small $15$};  
\node at (90.5,76.33) {$17$};  
\node at (82,69.33) {$19$};  
\node at (82,56.67) {$19$};  
\node at (92.5,51) {$21$};  
\node at (103,59.33) {$23$};  
\node at (114.5,51.67) {$23$};  
\node at (126,64.67) {$31$};  
\node at (105,29.33) {$44$};  
\node at (72.5,21) {$63$};  
\node at (141.5,25) {$75$};  
\node at (179,50) {$75$};  
\node at (41,54.67) {$82$};  
\node at (49.5,115) {$99$};  
\node at (163.5,110.33) {$106$};  
\node at (106.5,143) {$114$};  
\end{tikzpicture}(o)\quad
\\[2ex]
\begin{tikzpicture}[xscale=-.09,yscale=.0779]
\draw   
  (53,71)--(86,71)--(78.5,56)--(80.5,56)--(71.5,38)--(109.5,38)--(96,65)--(133,65)
  (53,35)--(71,71)--(79,55)--(79.5,56)--(80,55)--(63,55)--(76.5,28)
  (104.5,28)--(123,65)--(133,45)
  (89.5,38)--(99.5,58)--(79.5,58)--(89.5,38)
  (81,81)--(92.5,58)--(104,81)
  ;
\draw[dotted]
  (53,28)--(133,28)--(133,81)--(53,81)--cycle
  ;
\node at (96,60.33) {$7$};  
\node at (86,62.33) {$13$};  
\node at (78.5,66) {$15$};  
\node at (71,60.33) {$16$};  
\node at (71.5,49.33) {$17$};  
\node at (80.5,44) {$18$};  
\node at (89.5,51.33) {$20$};  
\node at (99.5,44.67) {$20$};  
\node at (109.5,56) {$27$};  
\node at (90.5,33) {$38$};  
\node at (64,36) {$55$};
\node at (129.67,58.33) {$65$};  
\node at (122,39) {$65$};  
\node at (59,59) {$71$};  
\node at (68,76) {$86$};  
\node at (117,73) {$92$};  
\node at (92.5,73.33) {$99$};   
\end{tikzpicture}(p)\quad
\\[2ex]
\begin{tikzpicture}[xscale=.01125,yscale=.00974]
\draw
(0,208)--(125.5,459)--(408.5,459)--(533.5,209)--(429,0)--(104,0)--(0,208)--(251,208)--(229.5,165)--(208,208)--(104,0)
(429,0)--(324.5,209)--(300,160)--(349,160)--(269,0)--(186.5,165)--(240.5,165)--(213.5,111)--(324.5,111)--(296,168)--(304,168)--(283.5,209)--(533.5,209)
  (267.5,111)--(235,176)--(300,176)--(267.5,111)
  (125.5,459)--(267,176)--(408.5,459)
  ;
\node at (304,195.33) {\tiny $41$};  
\node at (229.5,193.67) {\tiny $43$};  
\node at (208,179.33) {\tiny $43$};  
\node at (324.5,176.33) {\scriptsize $49$};  
\node at (324.5,143.67) {\scriptsize $49$};  
\node at (240.5,129) {\footnotesize $54$};  
\node at (213.5,147) {\footnotesize $54$};  
\node at (296,130) {\footnotesize $57$};  
\node at (267.5,154.33) {$65$};  
\node at (269,74) {$111$};  
\node at (349,53.33) {$160$};  
\node at (186.5,55) {$165$};  
\node at (104,138.66) {$208$};  
\node at (429,139.33) {$209$};  
\node at (408.5,292.33) {$250$};  
\node at (125.5,291.67) {$251$};  
\node at (267,364.67) {$283$};  
\end{tikzpicture}(q)\quad
\\[2ex]
\begin{tikzpicture}[xscale=-.1125,yscale=.0974]
\draw (0,36.1)--(21.7,79.5)--(70.8,79.5)--(92.55,36.0)--(74.55,0)--(18.05,0)--(0,36.1)--(43.4,36.1)--(39.75,28.8)--(41.35,28.8)--(36.5,19.1)--(55.6,19.1)--(50.9,28.5)--(60.3,28.5)--(46.05,0)--(32.05,28.0)--(40.95,28.0)--(40.55,28.8)--(40.15,28.0)--(36.1,36.1)--(18.05,0)
  (74.55,0)--(56.55,36.0)--(52.8,28.5)--(49.05,36.0)--(92.55,36.0)
  (46.2,19.1)--(40.55,30.4)--(51.85,30.4)--(46.2,19.1)
  (21.7,79.5)--(46.25,30.4)--(70.8,79.5)
  ;
\node at (49.05,32.267) {\footnotesize $56$};  
\node at (43.4,32.3) {\footnotesize $57$};  
\node at (39.75,33.667) {$73$};  
\node at (52.8,33.5) {$75$};  
\node at (56.55,31) {$75$};  
\node at (36.1,30.7) {$81$};  
\node at (36.9,25.033) {$89$};  
\node at (55.6,25.367) {$94$};  
\node at (50.9,22.233) {$94$};  
\node at (41.35,22.333) {$97$};  
\node at (46.2,26.633) {$113$};  
\node at (46.05,12.733) {$191$};  
\node at (32.05,9.333) {$280$};  
\node at (60.3,9.833) {$285$};  
\node at (74.55,24) {$360$};  
\node at (18.05,24.067) {$361$};
\node at (21.7,50.567) {$434$};  
\node at (70.8,50.5) {$435$};  
\node at (46.25,63.133) {$491$};  
\end{tikzpicture}(r)\quad
\\[2ex]
\begin{tikzpicture}[xscale=.08182,yscale=.07085]
\draw (0,49.4)--(29.6,108.6)--(96.4,108.6)--(125.95,49.5)--(101.2,0)--(24.7,0)--(0,49.4)--(59.2,49.4)--(54.3,39.6)--(56.5,39.6)--(49.95,26.5)--(76.45,26.5)--(69.75,39.9)--(71.65,39.9)--(66.85,49.5)--(125.95,49.5) (24.7,0)--(49.4,49.4)--(54.85,38.5)--(55.4,39.6)--(55.95,38.5)--(43.95,38.5)--(63.2,0)--(82.2,38.0)--(70.7,38.0)--(76.45,49.5)--(101.2,0)
  (63.05,26.5)--(70.7,41.8)--(55.4,41.8)--(63.05,26.5)
  (29.6,108.6)--(63.0,41.8)--(96.4,108.6)
  ;
\node at (59.2,44.333) {\footnotesize $76$};  
\node at (66.85,44.367) {\footnotesize $77$};  
\node at (71.65,46.3) {\small $96$};  
\node at (54.3,46.133) {\small $98$};  
\node at (49.4,42.133) {\small $109$};  
\node at (76.45,34.167) {\small $115$};  
\node at (76.45,41.833) {\small $115$};  
\node at (49.95,34.5) {$120$};  
\node at (56.5,30.867) {$131$};  
\node at (69.75,30.967) {$134$};  
\node at (63.05,36.7) {$153$};  
\node at (63.2,17.667) {$265$};  
\node at (82.2,12.667) {$380$};  
\node at (43.95,12.833) {$385$};  
\node at (24.7,32.933) {$494$};  
\node at (101.2,33) {$495$};
\node at (96.35,69.133) {$591$};  
\node at (29.55,69.2) {$592$};  
\node at (62.95,86.333) {$668$};  
\end{tikzpicture}(s)\quad
\\[2ex]
\begin{tikzpicture}[xscale=-.045,yscale=.0390]
\draw (0,33)--(19,71)--(62,71)--(82.5,30)--(67.5,0)--(16.5,0)--(0,33)--(38,33)--(35.5,28)--(42.5,28)--(41.5,30)--(82.5,30)
  (19,71)--(40.5,28)--(62,71)
  (16.5,0)--(33,33)--(39,21)--(43.5,30)--(48,21)--(27,21)--(37.5,0)--(52.5,30)--(67.5,0)
  ;
\node at (40.5,56.67) {$43$};  
\node at (62,43.67) {$41$};  
\node at (18,45.67) {$38$};  
\node at (16.5,22) {$33$};  
\node at (67.5,20) {$30$};  
\node at (52.5,10) {$30$};  
\node at (27,7) {$21$};  
\node at (37.5,14) {$21$};  
\node at (33,25) {\footnotesize $12$};  
\node at (43.5,24) {\footnotesize $9$};  
\node at (48,27) {\footnotesize $9$};  
\node at (39,25.67) {\tiny $7$};  
\end{tikzpicture}(t)\quad
\\[2ex]
\begin{tikzpicture}[xscale=-.09,yscale=.0779]
\draw
  (89.5,115)--(115,115)--(110,125)
  (89.5,64)--(99.5,84)
  (120,43)--(115,53)
  (163,43)--(168,53)
  (190.5,58)--(180.5,78)
  (168,103)--(190.5,103)
  (155,125)--(144,103)
  (115,115)--(99.5,84)
  (115,115)--(132.5,80)--(144,103)
  (115,53)--(99.5,84)--(130.5,84)--(115,53)--(168,53)--(155,79)--(156,79)--(144,103)--(168,103)
  (168,53)--(180.5,78)--(155.5,78)--(168,103)--(180.5,78)
  (142,53)--(155.5,80)--(128.5,80)--(142,53)  
  ;
\draw[dotted]
  (89.5,43)--(190.5,43)--(190.5,125)--(89.5,125)--cycle
  ;  
\node at (130.5,81.33) {\scriptsize $4$};  
\node at (115,94.33) {$31$};  
\node at (115,73.67) {$31$};  
\node at (128.5,62) {$27$};  
\node at (142,71) {$27$};  
\node at (155,61.67) {$26$};  
\node at (168,69.67) {$25$};  
\node at (168,86.33) {$25$};  
\node at (156,95) {$24$};  
\node at (144,87.33) {$23$};  
\node at (141.5,48) {$53$};  
\node at (180,53) {$78$};  
\node at (101,55) {$84$};  
\node at (183,88) {$103$};  
\node at (100,120) {$115$}; 
\node at (99,98) {$115$};  
\node at (170,114) {$127$};  
\node at (132.5,110) {$150$};  
\end{tikzpicture}(u)\quad
\\[2ex]
\begin{tikzpicture}[xscale=-.09,yscale=.0779]
\draw
(0,39)--(19.5,0)--(92.5,0)--(109,33)--(92.5,66)--(60.5,66)--(45,35)--(21.5,82)--(0,39)--(43,39)--(41,35)--(76,35)--(58.5,0)--(39,39)--(19.5,0)
  (92.5,0)--(75.5,34)--(76.5,34)--(60.5,66)
  (109,33)--(76,33)--(92.5,66)  
  (-10,82)--(21.5,82)--(16.5,92)
  (0,39)--(-10,19)
  (19.5,0)--(24.5,-10)
  (92.5,0)--(87.5,-10)
  (109,33)--(119,13)
  (92.5,66)--(119,66)
  (60.5,66)--(73.5,92)
  ;
\draw[dotted]
  (-10,-10)--(119,-10)--(119,92)--(-10,92)--cycle
  ;
\node at (45,73) {$202$};  
\node at (93,79) {$171$};  
\node at (5,87) {$155$};  
\node at (-.5,61) {$155$};  
\node at (110.17,48.33) {$139$};  
\node at (2,6) {$112$};  
\node at (107,3) {$106$};  
\node at (56,-5) {$73$};  
\node at (21.5,53.33) {$43$};  
\node at (19.5,26) {$39$};  
\node at (39,13) {$39$};  
\node at (58.5,23.33) {$35$};  
\node at (75.5,11.33) {$34$};  
\node at (92.5,22) {$33$};  
\node at (92.5,44) {$33$};  
\node at (76.5,55.33) {$32$};  
\node at (60.5,45.33) {$31$};
\node at (41,37.67) {\scriptsize $4$};
\node at (43,36.33) {\scriptsize $4$};
\end{tikzpicture}(v)\quad
\\[2ex]
\begin{tikzpicture}[xscale=-.1,yscale=.0866]
\draw (0,26.7)--(13.35,0)--(54.45,0)--(67.65,26.4)--(51.8,58.1)--(15.7,58.1)--(0,26.7)--(31.4,26.7)--(29.05,22.0)--(38.15,22.0)--(34.15,14.0)--(30.15,22.0)--(29.6,20.9)--(26.7,26.7)--(13.35,0)
(54.45,0)--(41.25,26.4)--(38.15,20.2)--(44.35,20.2)--(34.25,0)--(23.8,20.9)--(30.7,20.9)--(27.25,14.0)--(41.25,14.0)--(37.7,21.1)--(38.6,21.1)--(35.95,26.4)--(67.65,26.4)
  (51.8,58.1)--(33.75,22.0)--(15.7,58.1)
  ;
\node at (33.75,46.07) {$361$};  
\node at (15.7,37.17) {$314$};  
\node at (13.35,17.8) {$267$};  
\node at (23.8,6.97) {$209$};  
\node at (34.25,9.33) {$140$};  
\node at (44.35,6.73) {$202$};
\node at (54.45,17.6) {$264$};
\node at (51.8,36.97) {$317$};  
\node at (34.15,19.33) {$80$};  
\node at (37.7,16.37) {$71$};
\node at (30.7,16.3) {$69$};  
\node at (27.25,18.6) {$69$};  
\node at (41.25,18.13) {\footnotesize $62$};  
\node at (41.25,22.27) {\footnotesize $62$};
\node at (26.7,22.83) {\footnotesize $58$};
\node at (38.6,24.63) {\scriptsize $53$};  

\node at (29.05,25.13) {\tiny $47$};  
\node at (31.5,23.57) {\tiny $47$};  

\node at (35.95,23.47) {\tiny $44$};  

\end{tikzpicture}(w)
\vspace{2ex}

\begin{table}
\caption{Parameters of the tilings from the appendix\label{tab:hexagons}}

\begin{tabular}{|c|c|p{83mm}|c|}
\hline
\multicolumn{4}{|c|}{Tilings showing that $\SHEX(n) \ge n-4$ or even $\SHEXp(n) \ge n-4$}\\
\hline
& $n$ & sizes of tiles (references of related tilings) & t-perfect \\
\hline
(a) & 9 & 1, 3, 3, 3, 4, 4, 5, 5, 6 & no\\
(b) & 10 & 1, 1, 4, 4, 4, 5, 6, 6, 7, 8 (cf.\ \cite[p.\ 1488, third tiling]{drapal2010}) & no\\
(c) & 11 & 2, 2, 3, 5, 5, 7, 7, 8, 8, 9, 11 (cf.\ \cite[Figure 1]{tutte1948}, \cite[Figure 4]{tuza1991}, \cite[Figure 8]{drapal2010}, \cite[perfect\underline{ }dissection\underline{ }size15\underline{ }595\underline{ }r5\underline{ }c3]{hamalainen}) & yes\\
(d) & 11 & 1, 3, 4, 4, 4, 7, 7, 8, 9, 9, 10 & no\\
(e) & 12 & 1, 3, 3, 4, 7, 7, 7, 10, 11, 12, 12, 13 & no\\
(f) & 12 & 1, 1, 1, 2, 7, 7, 7, 9, 10, 11, 12, 14 & no\\
(g) & 13 & 2, 2, 2, 4, 5, 9, 9, 11, 13, 14, 14, 15, 19 & no\\
(h) & 14 & 3, 3, 3, 5, 5, 6, 11, 14, 16, 16, 17, 20, 21, 26 & no\\
(i) & 14 & 2, 3, 3, 3, 6, 8, 11, 11, 14, 17, 19, 21, 21, 23 & no\\
(j) & 14 & 1, 4, 5, 5, 6, 6, 11, 11, 16, 17, 20, 20, 23, 24 (cf. \cite[p.\ 1488, last tiling]{drapal2010}, \cite[perfect\underline{ }dissection\underline{ }size18\underline{ }30413\underline{ }r6\underline{ }c3]{hamalainen}) & yes\\
(k) & 15 & 1, 2, 2, 3, 3, 4, 5, 8, 8, 12, 12, 13, 15, 16, 17 (cf. \cite[perfect\underline{ }dissection\underline{ }size18\underline{ }30411\underline{ }r3\underline{ }c6]{hamalainen})& yes\\
(l) & 16 & 3, 5, 8, 8, 9, 11, 11, 11, 14, 22, 25, 30, 34, 34, 38, 43 & no\\
(m) & 17 & 3, 3, 7, 10, 12, 13, 13, 16, 16, 19, 29, 35, 42, 47, 47, 52, 59 & yes\\
(n) & 18 & 2, 6, 11, 13, 13, 15, 15, 17, 17, 23, 32, 47, 55, 55, 60, 73, 78, 84 & yes\\
(o) & 19 & 2, 2, 8, 15, 17, 19, 19, 21, 23, 23, 31, 44, 63, 75, 75, 82, 99, 106, 114 & yes\\
(p) & 21 & 1, 1, 1, 2, 7, 13, 15, 16, 17, 18, 20, 20, 27, 38, 55, 65, 65, 71, 86, 92, 99 & no\\
(q) & 22 & 8, 8, 11, 32, 33, 41, 43, 43, 49, 49, 54, 54, 57, 65, 111, 160, 165, 208, 209, 250, 251, 283 & yes\\ 
(r) & 24 & 8, 8, 8, 16, 19, 56, 57, 73, 75, 75, 81, 89, 94, 94, 97, 113, 191, 280, 285, 360, 361, 434, 435, 491 & no\\
(s) & 25 & 11, 11, 11, 19, 19, 22, 76, 77, 96, 98, 109, 115, 115, 120, 131, 134, 153, 265, 380, 385, 494, 495, 591, 592, 668 & no\\
\hline\hline
\multicolumn{4}{|c|}{Tilings showing that $\SHEXp(n) \ge n-5$}\\
\hline
& $n$ & \multicolumn{2}{l|}{sizes of tiles (references of related tilings)}\\
\hline
(t) & 16 & \multicolumn{2}{p{100mm}|}{2, 2, 5, 5, 7, 9, 9, 12, 21, 21, 30, 30, 33, 38, 41, 43 (cf. \cite[perfect\underline{ }dissection\underline{ }size19\underline{ }30749\underline{ }r0\underline{ }c2]{hamalainen})}\\
(u) & 20 & \multicolumn{2}{p{100mm}|}{1, 1, 4, 23, 24, 25, 25, 26, 27, 27, 31, 31, 53, 78, 84, 103, 115, 115, 127, 150}\\
(v) & 21 & \multicolumn{2}{p{100mm}|}{1, 1, 4, 4, 31, 32, 33, 33, 34, 35, 39, 39, 43, 73, 106, 112, 139, 155, 155, 171, 202}\\
(w) & 23 & \multicolumn{2}{p{100mm}|}{9, 9, 11, 11, 44, 47, 47, 53, 58, 62, 62, 69, 69, 71, 80, 140, 202, 209, 264, 267, 314, 317, 361}\\
\hline
\end{tabular}
\end{table}


\bibliographystyle{plain}


\end{document}